\documentclass[12pt, a4paper]{article}
\usepackage[left=2.8cm, right=2.8cm]{geometry}
\usepackage[utf8]{inputenc}
\usepackage{amsmath, amsthm, amssymb}
\usepackage{thmtools}
\usepackage{xfrac}
\usepackage{bm}
\usepackage{xcolor}
\usepackage{url}
\usepackage{footmisc}

\numberwithin{equation}{section}

\newtheorem{theorem}[equation]{Theorem}
\newtheorem{lem}[equation]{Lemma} 

\newtheorem{cor}[equation]{Corollary}

\newtheorem{conj}[equation]{Conjecture}


\theoremstyle{definition}
\newtheorem{example}[equation]{Example}

\newtheoremstyle{case}{}{}{\normalfont}{}{\itshape}{\normalfont:}{ }{}

\theoremstyle{case}
\newtheorem{case}{Case}

\newtheorem{remark}[equation]{Remark}
\usepackage{mathtools}

\newcommand{\floor}[1]{\left\lfloor #1 \right\rfloor}
\newcommand{\ceil}[1]{\left\lceil #1 \right\rceil}

\newcommand{\F}{\mathcal{F}}
\newcommand{\G}{\mathcal{G}}
\newcommand{\A}{\mathcal{A}}
\newcommand{\X}{\mathcal{X}}
\newcommand{\Y}{\mathcal{Y}}

\newcommand{\s}{\sigma}

\newcommand{\maxprod}{\pi}

\title{Improved bounds for cross-Sperner systems}
\author{
    Natalie Behague\footnotemark[1]\thanks{Department of Mathematics and Statistics, University of Victoria. \newline E-mail: \texttt{\{nbehague,akuperus,nmorrison,ashnawright\} @uvic.ca}.}  \thanks{Research supported by a PIMS Postdoctoral Fellowship} \and
    Akina Kuperus\footnotemark[1] \thanks{Research supported by NSERC CGS-M.}
    \and Natasha Morrison\footnotemark[1] \thanks{Research supported by NSERC Discovery Grant RGPIN-2021-02511 and NSERC Early Career Supplement DGECR-2021-00047}
	\and Ashna Wright\footnotemark[1] \thanks{Research supported by NSERC USRA.}
	}

\begin{document}

\maketitle
\begin{abstract}
    A collection of families $(\F_{1}, \F_{2} , \cdots , \F_{k}) \in \mathcal{P}([n])^k$ is \emph{cross-Sperner} if there is no pair $i \not= j$ for which some $F_i \in \F_i$ is comparable to some $F_j \in \F_j$. 
    Two natural measures of the `size' of such a family are the sum $\sum_{i = 1}^k |\F_i|$ and the product $\prod_{i = 1}^k |\F_i|$. We prove new upper and lower bounds on both of these measures for general $n$ and $k \ge 2$ which improve considerably on the previous best bounds. In particular, we construct a rich family of counterexamples to a conjecture of  Gerbner, Lemons, Palmer, Patk\'{o}s, and Sz\'{e}csi from 2011.
    
\end{abstract}

\section{Introduction}
A family $\F \subseteq \mathcal{P}([n])$ is an \emph{antichain} (also known as a \emph{Sperner} family) if for all distinct $ F, G \in \F$, neither $F \subseteq G$ nor $G \subseteq F$ (i.e.~$F$ and $G$ are \emph{incomparable}). 
One of the principal results in extremal combinatorics is Sperner's theorem \cite{Sperner}, which states that the largest size of an antichain in $\mathcal{P}([n])$ is $\binom{n}{\floor{n/2}}$. This can be seen to be tight by taking a `middle layer', that is $\F = \binom{[n]}{\floor{n/2}}$ or $\F = \binom{[n]}{\lceil n/2 \rceil}$.

It is natural to consider a generalisation of Sperner's theorem to multiple families of sets. For $k \ge 2$, say that a collection of non-empty 
families $(\F_{1}, \F_{2} , \cdots , \F_{k}) \in \mathcal{P}([n])^k$, is \emph{cross-Sperner} if for all $i \not= j$, the sets $F_i$ and $F_j$ are incomparable for any $F_i \in \F_i$ and $F_j \in \F_j$. 
(We may also write that $(\F_{1}, \F_{2} , \cdots , \F_{k})$ is \emph{cross-Sperner in} $\mathcal{P}([n])$.) 
The study of such objects goes back to the 1970s when Seymour~\cite{seymour} deduced from a result of Kleitman~\cite{kleitman} that a cross-Sperner pair $(\F, \G)$ in $\mathcal{P}([n])$ satisfies 

\begin{equation}\label{eq:sumsqrt}
    |\F|^{1/2} + |\G|^{1/2} \leq 2^{n/2},
\end{equation} hence resolving a related conjecture of Hilton (see~\cite{brace}). Equality is obtained in Seymour's bound precisely when the minimal sets of $\F$ are pairwise disjoint from the minimal sets intersecting each set of $\G$. A broad spectrum of research concerning discrete objects with `Sperner-like' properties have since emerged (see, for example, \cite{randomsperners4, randomsperner, randomsperner2, eng, maximalantichain3,  maximalantichain2, maximalantichains, maximalantichains4, maximalantichains5, randomsperner3, west}). Many related results concern families satisfying both Sperner-type properties, and additional properties such as conditions on intersections (see, for example~\cite{FUREDI, Kleitmancrossinter, lz, MATSUMOTO,  PYBER}). 

Let $\F = (\F_{1}, \F_{2} , \cdots , \F_{k})$ be cross-Sperner in $\mathcal{P}([n])$. There are several natural measures of the `size' of such a family. These include the sum $\sum_{i = 1}^k |\F_i|$ and the product $\prod_{i = 1}^k |\F_i|$. The general study of these quantities was initiated by Gerbner, Lemons, Palmer, Patk\'{o}s, and Sz\'{e}csi~\cite{xsperner}, who essentially proved best possible bounds on cross-Sperner \emph{pairs} of families. 

Concerning the product, they gave a direct proof that a cross-Sperner pair $(\F, \G)$ in $\mathcal{P}([n])$ satisfies
\begin{equation}\label{eq:prod}
  |\F|\cdot |\G| \le 2^{2n - 4}. 
\end{equation}
 To see that this bound is tight, consider $\F = \{ F \subseteq [n] : 1 \in F, n \not\in F\}$ and $\G = \{G \subseteq [n] : 1 \not\in G, n \in G\}$. It is straightforward to see that the bound in \eqref{eq:prod} can also be obtained as a direct consequence of \eqref{eq:sumsqrt} via the AM--GM inequality\footnote{Observe that  $2\left(|\F||\G|\right)^{1/4}  \le |\F|^{1/2} + |\G|^{1/2} \le 2^{n/2}$. }. 

First, let us focus on product bounds for $k \ge 3$. It is convenient to define
\[
    \maxprod(n,k):= 
    \max\left\{
        \prod_{i = 1}^k |\F_i|: (\F_{1}, \F_{2} , \cdots , \F_{k}) \text{ is cross-Sperner in } \mathcal{P}([n])
    \right\}.
\]
In~\cite{xsperner}, it was observed that \eqref{eq:sumsqrt} can be used to obtain the upper bound $\maxprod(n,k) \leq 2^{k(n -2)}$. For $k > 4$, an improved bound of $\maxprod(n,k) \le \left(\frac{2^n}{k} \right)^k$ can be obtained by a simple application of the AM-GM inequality.\footnote{Similarly to above, we have $\left(\prod_{i = 1}^k |\F_i|\right)^{\frac{1}{k}} \le \frac{\sum_{i=1}^k |\F_i|}{k} \le \frac{2^n}{k}$.} 
Gerbner, Lemons Palmer, Patk\'{o}s, and Sz\'{e}csi~\cite{xsperner} conjectured that $\maxprod(n,k) \leq 2^{k(n-\ell^*)}$, where $\ell^* = \ell^*(k)$ is the least positive integer such that $\binom{\ell^*}{\floor{\ell^*/2}} \geq k$. 
They described a construction which provides a matching lower bound to their conjecture. Let $A_1,\ldots, A_k$ be an antichain in $\mathcal{P}([l])$ and let $(\F_1,\ldots, \F_k) \in \mathcal{P}([n])$ be defined by $\F_{i} := \{F \in [n] : F \cap [\ell] = A_i\}$. 

Our first theorem strongly disproves this conjecture. 

\begin{restatable}{theorem}{prodl}
\label{thm:prodl}
Let $n$ and $k \ge 2$ be integers. For $n$ sufficiently large, 
\[
    \left(\frac{2^n}{ek}\right)^k \le \maxprod(n,k).
\]
\end{restatable}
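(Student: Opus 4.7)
The plan is to construct, for every $k \ge 2$ and $n$ sufficiently large, an explicit cross-Sperner $k$-tuple $(\F_1,\ldots,\F_k)$ in $\mathcal{P}([n])$ with product at least $(2^n/(ek))^k$. The construction refines the family described in the Gerbner--Lemons--Palmer--Patk\'{o}s--Sz\'{e}csi conjecture by replacing each fixed ``signature'' $A_i$ with an up-closed family of tunable size inside a local block.

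Pick an integer parameter $m = m(k)$, to be chosen large (on the order of $\log_2 k$) with $km \le n$, and partition $[n]$ into disjoint blocks $B_0,B_1,\ldots,B_k$ with $|B_i| = m$ for each $i \in [k]$. For each $i \in [k]$, fix an up-closed family $\mathcal{A}_i \subseteq \mathcal{P}(B_i)$ with $|\mathcal{A}_i| = a := \lfloor 2^m/k \rfloor$; such families exist for every integer $a \in [0,2^m]$, since iteratively deleting a minimal element from the full power set preserves up-closedness and decreases the size by one. Then set
\[
    \F_i := \{ F \subseteq [n] : F \cap B_i \in \mathcal{A}_i \text{ and } F \cap B_j \notin \mathcal{A}_j \text{ for all } j \in [k]\setminus\{i\}\}.
\]

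The cross-Sperner property is immediate from up-closedness: if $F_i \in \F_i$, $F_j \in \F_j$ and $F_i \subseteq F_j$ for some $i \ne j$, then $F_i \cap B_i \subseteq F_j \cap B_i$, with $F_i \cap B_i \in \mathcal{A}_i$ while $F_j \cap B_i \notin \mathcal{A}_i$, contradicting up-closedness of $\mathcal{A}_i$. The defining conditions decouple across the blocks, so
\[
    |\F_i| = a(2^m - a)^{k-1}\cdot 2^{\,n-km},
    \qquad
    \prod_{i=1}^k |\F_i| = 2^{kn}\bigl(q(1-q)^{k-1}\bigr)^k,
\]
where $q := a/2^m$.

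It then remains to verify $q(1-q)^{k-1} \ge 1/(ek)$ for $m$ large. The function $q \mapsto q(1-q)^{k-1}$ is maximised at $q^\ast = 1/k$, with value $(1/k)(1-1/k)^{k-1}$. The elementary bound $(1-1/k)^{k-1} \ge 1/e$ (which follows from $e^x \ge 1+x$ applied at $x = 1/(k-1)$) shows $(1/k)(1-1/k)^{k-1} \ge 1/(ek)$, and this is strict for every finite $k \ge 2$ with slack of order $1/k$. Since $|q - 1/k| \le 2^{-m}$, taking $m$ sufficiently large in terms of $k$ makes the quadratic loss around $q^\ast$ smaller than this slack, and $q(1-q)^{k-1} \ge 1/(ek)$ follows. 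Any $n$ large enough to accommodate such an $m$ in the partition then yields the theorem. The only real obstacle is this discretisation bookkeeping; the essential combinatorial input is the flexibility to choose $|\mathcal{A}_i|$ arbitrarily, which lets $q$ track $1/k$ irrespective of whether $k$ is a power of two.
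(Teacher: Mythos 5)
Your construction is essentially the paper's (Lemma~\ref{lem:prodlower}): a block partition of $[n]$, monotone families of tunable size in each block with density tracking $1/k$, the identity $\prod_i|\F_i| = \bigl(q(1-q)^{k-1}\bigr)^k 2^{kn}$, and the observation that the strict gap between $(1-1/k)^{k-1}$ and $1/e$ absorbs the $O(2^{-m})$ discretisation error. The only cosmetic differences are that you use upsets $\mathcal{A}_i$ where the paper uses initial colex segments (downsets $\X_i$, equivalent under complementation), you isolate a free block $B_0$ rather than partitioning into $k$ near-equal parts, and your final discretisation step invokes a quadratic-loss Taylor estimate where the paper uses the cleaner monotonicity bound $q \le 1/k \Rightarrow (1-q)^{k-1} \ge (1-1/k)^{k-1}$; all of these are interchangeable.
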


A crude application of Stirling's approximation yields that $\ell^*(k) = \omega(\log k)$. So in particular, there is a function $g(k)$ tending to infinity with $k$ such that  $2^{k(n-\ell^*)} = O\left( 2^{kn}(k \cdot g(k))^{-k}\right)$. 
Therefore our lower bound is exponentially larger than the conjectured $2^{k(n- \ell^*)}$.

We also improve the previous best known upper bound by a factor of $2^k$.
\begin{restatable}{theorem}{produ}
\label{thm:produ}
Let $n$ and $k \ge 2$ be integers. Then 
\[
    \maxprod(n,k) \le \left( \frac{2^n}{k^2}\right)^{k}\floor{\frac{k}{2}}^{\floor{k/2}}\ceil{\frac{k}{2}}^{\ceil{k/2}}
\]
\end{restatable}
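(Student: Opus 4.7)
The plan is to reduce the $k$-family problem to the two-family Seymour bound~\eqref{eq:sumsqrt} by partitioning the families into two roughly equal groups. Write $a = \floor{k/2}$, $b = \ceil{k/2}$, and partition $[k]$ into disjoint subsets $A, B$ of sizes $a$ and $b$. The cross-Sperner hypothesis forces the families to be pairwise disjoint (since a set in $\F_i\cap\F_j$ with $i\ne j$ would be comparable to itself), so $\mathcal{G}_A := \bigcup_{i \in A}\F_i$ and $\mathcal{G}_B := \bigcup_{j \in B}\F_j$ satisfy $|\mathcal{G}_A| = \sum_{i\in A}|\F_i|$ and $|\mathcal{G}_B| = \sum_{j\in B}|\F_j|$. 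Moreover $(\mathcal{G}_A, \mathcal{G}_B)$ is itself a cross-Sperner pair: any $F \in \mathcal{G}_A$ lies in some $\F_i$ with $i \in A$ and any $G \in \mathcal{G}_B$ lies in some $\F_j$ with $j \in B$, so $i \ne j$ and hence $F, G$ are incomparable. Applying~\eqref{eq:sumsqrt} to this pair yields
\[
\Bigl(\textstyle\sum_{i\in A}|\F_i|\Bigr)^{1/2} + \Bigl(\textstyle\sum_{j\in B}|\F_j|\Bigr)^{1/2} \le 2^{n/2}.
\]

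Next I would apply AM--GM separately on each sum. Writing $P_A := \prod_{i \in A}|\F_i|$ and $P_B := \prod_{j \in B}|\F_j|$, AM--GM gives $\sum_{i \in A} |\F_i| \ge a\,P_A^{1/a}$ and $\sum_{j \in B} |\F_j| \ge b\,P_B^{1/b}$, so the previous inequality becomes
\[
a^{1/2}\, P_A^{1/(2a)} + b^{1/2}\, P_B^{1/(2b)} \le 2^{n/2}.
\]
Setting $\tilde u := a^{1/2} P_A^{1/(2a)}$ and $\tilde v := b^{1/2} P_B^{1/(2b)}$, this reads $\tilde u + \tilde v \le 2^{n/2}$, while $\prod_{i=1}^k|\F_i| = P_A P_B = \tilde u^{2a}\tilde v^{2b}/(a^a b^b)$.

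Finally, I would bound $\tilde u^{2a}\tilde v^{2b}$ by a weighted AM--GM with weights $2a/(2k)$ and $2b/(2k)$:
\[
\Bigl(\tfrac{\tilde u}{2a}\Bigr)^{2a}\Bigl(\tfrac{\tilde v}{2b}\Bigr)^{2b} \le \Bigl(\tfrac{\tilde u + \tilde v}{2k}\Bigr)^{2k} \le \Bigl(\tfrac{2^{n/2}}{2k}\Bigr)^{2k},
\]
which, after multiplying through by $(2a)^{2a}(2b)^{2b}$, gives $\tilde u^{2a}\tilde v^{2b} \le a^{2a} b^{2b}\,2^{nk}/k^{2k}$. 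Dividing by $a^a b^b$ and combining with the expression for $P_A P_B$ produces
\[
\prod_{i=1}^k |\F_i| \le \Bigl(\tfrac{2^n}{k^2}\Bigr)^{k} a^a b^b.
\]
The map $x \mapsto x\ln x + (k-x)\ln(k-x)$ is convex on $[1,k-1]$ and symmetric about $k/2$, so $a^a b^b$ is minimised over positive integers with $a+b=k$ at $a = \floor{k/2}$, $b = \ceil{k/2}$, yielding the claimed bound. The whole argument is a short chain of known inequalities: the only creative ingredient is collapsing the $k$ families into two nearly-equal blocks so that Seymour's pairwise bound can be leveraged, and I do not anticipate any substantive obstacle.
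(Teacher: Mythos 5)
Your proof is correct and reaches the stated bound. The core strategy is the same as the paper's: split the $k$ families into two blocks of sizes $a = \lfloor k/2\rfloor$ and $b = \lceil k/2\rceil$, observe that the two unions form a cross-Sperner pair, and then apply AM--GM within each block. The difference is in the two intermediate steps. Where you invoke Seymour's inequality \eqref{eq:sumsqrt} directly, the paper instead uses its comparability-number lower bound (Theorem~\ref{thm:compnum}) to write $|\mathcal{H}| \le 2^n - c(n,m) \le (2^{n/2} - \sqrt{m})^2$; these two tools are essentially equivalent --- indeed one can derive either from the other --- so this is a cosmetic difference of which black box you open. The more substantive divergence is the final optimization: the paper substitutes $m = |\mathcal G|$ into a function $h(m)$, differentiates, and solves for the critical point $m = a^2 2^n/k^2$, whereas you substitute $\tilde u = a^{1/2}P_A^{1/(2a)}$, $\tilde v = b^{1/2}P_B^{1/(2b)}$ and finish with a single weighted AM--GM estimate for $\tilde u^{2a}\tilde v^{2b}$ subject to $\tilde u + \tilde v \le 2^{n/2}$. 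Your route avoids calculus entirely and is arguably cleaner. One small remark: your closing sentence about $a^a b^b$ being minimised at $a = \lfloor k/2\rfloor$ is motivation rather than a needed step, since you fixed $a = \lfloor k/2\rfloor$ and $b = \lceil k/2\rceil$ from the outset; the argument already delivers the theorem without it. You also make explicit the (correct) observation that the $\F_i$ are pairwise disjoint so that $|\mathcal G_A| = \sum_{i\in A}|\F_i|$, which the paper uses implicitly when it writes $\prod_{i=1}^a|\F_i| \le (m/a)^a$.
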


Regarding bounds on the sum, in~\cite{xsperner} it is shown that for $n$ sufficiently large, a cross-Sperner pair in $\mathcal{P}([n])$ satisfies 
\begin{equation}\label{eq:sumpair}
    |\F| + |\G| \le 2^{n} - 2^{\ceil{n/2}} - 2^{\floor{n/2}} + 2.
\end{equation} 
This is tight, which can be seen by taking $\F = \{1,2, \ldots, \floor{n/2}\}$ and letting $\G$ be all subsets of $[n]$ that are not comparable to $F$. Gerbner, Lemons Palmer, Patk\'{o}s, and Sz\'{e}csi~\cite{xsperner} also asked about bounds for the sum for general $k$. Analogously to in the product case, define
\[
\sigma(n,k):= \max\left\{\sum_{i = 1}^k |\F_i|: (\F_{1}, \F_{2} , \cdots , \F_{k}) \text{ is cross-Sperner in } \mathcal{P}([n])\right\}.
\]
In our next theorem, we determine upper and lower bounds on $\sigma(n,k)$. 
\begin{theorem}\label{thm:cleansum}
Let $n,k$ be integers with $n \ge 2k$. Then
\[
2^{n}  - \frac{3}{\sqrt{2}}\sqrt{2^n k} + 2(k-1)
~\le~  \sigma(n,k)
~\le~ 2^{n} - 2\sqrt{2^n (k-1)} + 2(k-1).
\]
\end{theorem}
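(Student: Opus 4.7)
The plan is to prove the two bounds separately, both leveraging Seymour's inequality \eqref{eq:sumsqrt}.

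For the upper bound, the key step is to observe that for any nonempty proper subset $T\subsetneq[k]$, the pair $\bigl(\bigcup_{i\in T}\F_i,\,\bigcup_{i\notin T}\F_i\bigr)$ is itself cross-Sperner in the pair sense (every pair of sets drawn from distinct $\F_i$'s is incomparable by hypothesis), so \eqref{eq:sumsqrt} yields
\[
    \sqrt{\textstyle\sum_{i\in T}|\F_i|}\;+\;\sqrt{\textstyle\sum_{i\notin T}|\F_i|}\;\le\;2^{n/2}.
\]
I would apply this with $T=\{j^*\}$, where $j^*$ maximises $|\F_{j^*}|$. Setting $p:=\sqrt{|\F_{j^*}|}$ and $q:=\sqrt{\sum_{i\neq j^*}|\F_i|}$, non-emptiness of every $\F_i$ forces $q\ge\sqrt{k-1}$, and \emph{max $\ge$ average} applied to the $k-1$ sizes $|\F_i|$ for $i\ne j^*$ gives $p\ge q/\sqrt{k-1}$. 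The quantity $\sigma(n,k)=p^2+q^2$ is therefore at most the maximum of this convex function on the triangle cut out by $p+q\le 2^{n/2}$, $q\ge\sqrt{k-1}$, and $p\ge q/\sqrt{k-1}$. This maximum is attained at a vertex; a direct comparison using $n\ge2k$ (which forces $\sqrt{2^n}\ge 2^k\ge k+\sqrt{k-1}-1$) rules out the other two vertices and singles out $(p,q)=\bigl(2^{n/2}-\sqrt{k-1},\,\sqrt{k-1}\bigr)$, giving $\sigma(n,k)\le 2^n-2\sqrt{2^n(k-1)}+2(k-1)$.

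For the lower bound I would give an explicit construction. Let $s$ be the integer closest to $s^\ast:=(n+\log_2 k)/2$, so that
\[
    2^s+k\cdot 2^{n-s}\;=\;2^{s^\ast}\bigl(2^{s-s^\ast}+2^{-(s-s^\ast)}\bigr)\;\le\;\bigl(\sqrt 2+1/\sqrt 2\bigr)\sqrt{k\cdot 2^n}\;=\;\tfrac{3}{\sqrt 2}\sqrt{k\cdot 2^n}.
\]
The hypothesis $n\ge 2k$ ensures $k-1\le s\le n$, so I may set $A:=[s]\subseteq[n]$, define $\F_i:=\{A\setminus\{i\}\}$ for $1\le i\le k-1$, and let
\[
    \F_k\;:=\;\bigl\{\,S\subseteq[n]:\;S\text{ is incomparable to }A\setminus\{i\}\text{ for every }i\in[k-1]\,\bigr\}.
\]
The anchors $A\setminus\{1\},\dots,A\setminus\{k-1\}$ all have size $s-1$, so they form an antichain, which together with the defining condition on $\F_k$ makes the collection cross-Sperner.

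It remains to bound $|\F_k|=2^n-\bigl|\bigcup_{i=1}^{k-1}\mathrm{comp}(A\setminus\{i\})\bigr|$ by a clean inclusion--exclusion. The union of the down-sets equals $\{S\subseteq A:[k-1]\not\subseteq S\}$, of size $2^s-2^{s-k+1}$; the union of the up-sets equals $\{S\subseteq[n]:A\setminus S\subseteq\{i\}\text{ for some }i\in[k-1]\}$, of size $k\cdot 2^{n-s}$; and their intersection is exactly the antichain $\{A\setminus\{i\}\}_{i=1}^{k-1}$, of size $k-1$. Dropping the positive correction $2^{s-k+1}$ and adding the $k-1$ singleton families yields
\[
    \sigma(n,k)\;\ge\;2^n-\bigl(2^s+k\cdot 2^{n-s}\bigr)+2(k-1),
\]
and the bound on $2^s+k\cdot 2^{n-s}$ above finishes the proof. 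The main subtlety I anticipate is the vertex comparison in the upper bound: the relevant quadratic in $\sqrt{2^n}$ has discriminant $(k+2\sqrt{k-1})(k-2\sqrt{k-1})=(k-2)^2$, so its two roots are $\sqrt{k-1}+1$ and $k+\sqrt{k-1}-1$, and one must verify that $n\ge 2k$ places $\sqrt{2^n}$ above the larger root; everything else is careful bookkeeping.
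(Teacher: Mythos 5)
Your proof is correct and follows essentially the same route as the paper. Your lower-bound construction (anchors $A\setminus\{i\}$ with $A=[s]$) is the set-complement of the paper's antichain $\{G\cup\{i\}\}$ from Lemma~\ref{lem:antichain}, and your upper-bound argument via Seymour's inequality \eqref{eq:sumsqrt} together with the max-versus-average constraint is the paper's comparability-number argument (Theorem~\ref{thm:compnum} plus Lemma~\ref{lem:combine}) in lightly different clothing, since Seymour applied to the pair $(\bigcup_{i\ne j^*}\F_i,\F_{j^*})$ is exactly equivalent to the bound $|\F_{j^*}|\le 2^n-c(n,m)$ with $c(n,m)\ge 2^{n/2+1}\sqrt{m}-m$.
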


When $k$ is a power of 2 and $n - \log_2{k}$ is even, we can further improve the lower bound to
$2^{n}  - 2\sqrt{2^n k} + 2(k-1)$, which is extremely close to the upper bound.

In order to prove Theorem~\ref{thm:produ} and the upper bound in Theorem~\ref{thm:cleansum}, we exploit a connection between $\sigma(n,k)$ and the \emph{comparability number} of a set (given in Section~\ref{sec:comp}). In doing so, we recover a simple proof of \eqref{eq:sumpair} (see Theorem~\ref{thm:sumpair}) that holds for \emph{all} $n$ (recall the result of \cite{xsperner} holds for large $n$). 

The article is structured as follows. We introduce the comparability number in Section~\ref{sec:comp} and provide a lower bound (Theorem~\ref{thm:compnum}) that will be used in the proofs of Theorems~\ref{thm:produ} and \ref{thm:cleansum}. In Section~\ref{sec:prod} we prove Theorems~\ref{thm:prodl} and \ref{thm:produ} bounding the product. In Section~\ref{sec:sum} we prove Theorem~\ref{thm:cleansum} bounding the sum. We conclude in section~\ref{sec:conc} with some discussion and open questions. 




\section{Minimizing Comparability} \label{sec:comp}
Given a family $\F \subseteq \mathcal{P}([n])$ define the \emph{comparability number of} $\F$ to be
\[
    c(n,\F) := |\{X \subseteq [n] : X \text{ is comparable to some } A \in \F \}|.
\]
When the setting is clear from context, we may write $c(\F)$ for $c(n,\F)$.
Define 
\[
    c(n,m) = \min\{c(n,\F) : \F \subseteq \mathcal{P}([n]), |\F| = m\}.
\]

As noted in \cite{xsperner}, there is a direct relationship between $\sigma(n,2)$ and $c(n,m)$. Observe that if $(\F,\G)$ is cross-Sperner in $\mathcal{P}([n])$, we have
$$|\F| + |\G| \le |\F| + 2^n - c(n,|\F|),$$
as any set incomparable to every member of $\F$ can be added to $\G$. We will use analogous ideas in Section~\ref{sec:sum} to provide upper bounds on $\sigma(n,k)$ for $k \ge 3$. 

Our goal in this section is to find a lower bound on $c(n,m)$. We begin by showing that families that minimize comparability are `convex'.

\begin{lem}\label{ClosedUnderContainment}
Let $\F \subseteq \mathcal{P}([n])$. Let $\F' := \F \cup \{ Z \in \mathcal{P}([n]) : X \subseteq Z \subseteq Y, \text{ where } X,Y \in \F\}$. Then $c(\F') = c(\F)$.
\end{lem}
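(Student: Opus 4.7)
The plan is to establish the two inequalities $c(\F) \le c(\F')$ and $c(\F') \le c(\F)$ separately. The first direction is immediate because $\F \subseteq \F'$, so any $X \subseteq [n]$ comparable to a member of $\F$ is automatically comparable to a member of $\F'$, giving $c(\F) \le c(\F')$.

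For the reverse direction, I would let $W \subseteq [n]$ be an arbitrary set that is comparable to some $Z \in \F'$, and argue that $W$ is already comparable to some element of $\F$. If $Z \in \F$ we are done, so assume $Z \in \F' \setminus \F$, meaning there exist $X,Y \in \F$ with $X \subseteq Z \subseteq Y$. Now split into the two cases of comparability.

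\textbf{Case 1:} $W \subseteq Z$. Then $W \subseteq Z \subseteq Y$, so $W$ is comparable to $Y \in \F$.

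\textbf{Case 2:} $Z \subseteq W$. Then $X \subseteq Z \subseteq W$, so $W$ is comparable to $X \in \F$.

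In either case $W$ is counted by $c(\F)$, so every set counted by $c(\F')$ is also counted by $c(\F)$, giving $c(\F') \le c(\F)$. Combined with the first direction this yields equality. There is no real obstacle here — the entire content is the observation that a set sandwiched between $X$ and $Y$ inherits comparability from whichever of $X$ or $Y$ lies on the same side of $W$. No calculation or extremal argument is needed.
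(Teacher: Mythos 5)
Your proof is correct and rests on exactly the same observation as the paper's: a set comparable to $Z$ with $X \subseteq Z \subseteq Y$ must be comparable to $X$ or to $Y$. The only cosmetic difference is that you prove the containment of comparability sets directly, whereas the paper adds one sandwiched set at a time and iterates.
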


\begin{proof}[Proof of Lemma \ref{ClosedUnderContainment}]
Let $Z$ be a set such that $X \subseteq Z \subseteq Y$, for some $X,Y \in \mathcal{F}$. Observe that any set in $\mathcal{P}([n])$ that is comparable to $Z$ is either comparable to $X$ or to $Y$. So $c(\F \cup Z) = c(\F)$. Repeatedly applying this observation gives the result. 
\end{proof}

Theorem \ref{thm:compnum} can now be deduced from the Harris-Kleitman inequality. Recall that a family $\mathcal{U} \subseteq \mathcal{P}([n])$ is an \emph{upset} if for all $X \in \mathcal{U}$, if $X \subseteq Y$, then $Y \in \mathcal{U}$. A family $\mathcal{D} \subseteq \mathcal{P}([n])$ is a \emph{downset} if for all $X \in \mathcal{D}$, if $Y \subseteq X$, then $Y \in \mathcal{D}$. 

\begin{lem}[Harris-Kleitman Inequality \cite{kleitman}] \label{harriskleitman}
Let $ \mathcal{U} \subseteq \mathcal{P}([n])$ be an upset and $\mathcal{D} \subseteq \mathcal{P}([n])$ be a downset. Then 
\[
    \frac{|\mathcal{U} \cap \mathcal{D}|}{2^{n}} \leq \frac{|\mathcal{U|}}{2^{n}} \cdot \frac{|\mathcal{D}|}{2^{n}}.
\]
\end{lem}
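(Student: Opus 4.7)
The plan is to prove Lemma~\ref{harriskleitman} by induction on $n$. The base case $n = 0$ (or $n = 1$, verified by hand against the finitely many possible up-/downsets in $\mathcal{P}([1])$) is immediate. For the inductive step I would split each family according to whether it contains the element $n$: for the upset $\mathcal{U}$, set
\[
\mathcal{U}_0 := \{U \in \mathcal{U} : n \notin U\}, \qquad \mathcal{U}_1 := \{U \setminus \{n\} : U \in \mathcal{U},\ n \in U\},
\]
and define $\mathcal{D}_0, \mathcal{D}_1$ analogously. A short check shows that $\mathcal{U}_0, \mathcal{U}_1$ are both upsets in $\mathcal{P}([n-1])$; moreover, because $\mathcal{U}$ is an upset, adjoining $n$ to any $U \in \mathcal{U}_0$ still produces an element of $\mathcal{U}$, so $\mathcal{U}_0 \subseteq \mathcal{U}_1$. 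Symmetrically, $\mathcal{D}_0, \mathcal{D}_1$ are downsets in $\mathcal{P}([n-1])$ with $\mathcal{D}_1 \subseteq \mathcal{D}_0$.

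Next, I would rewrite the target inequality $2^n |\mathcal{U} \cap \mathcal{D}| \le |\mathcal{U}| \cdot |\mathcal{D}|$ via the partition identities $|\mathcal{U}| = |\mathcal{U}_0| + |\mathcal{U}_1|$, $|\mathcal{D}| = |\mathcal{D}_0| + |\mathcal{D}_1|$, and $|\mathcal{U} \cap \mathcal{D}| = |\mathcal{U}_0 \cap \mathcal{D}_0| + |\mathcal{U}_1 \cap \mathcal{D}_1|$, and then apply the inductive hypothesis to the pairs $(\mathcal{U}_0, \mathcal{D}_0)$ and $(\mathcal{U}_1, \mathcal{D}_1)$ in $\mathcal{P}([n-1])$. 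This bounds $2^n |\mathcal{U} \cap \mathcal{D}|$ by $2(|\mathcal{U}_0||\mathcal{D}_0| + |\mathcal{U}_1||\mathcal{D}_1|)$, so the claim reduces to
\[
2\bigl(|\mathcal{U}_0||\mathcal{D}_0| + |\mathcal{U}_1||\mathcal{D}_1|\bigr) \le (|\mathcal{U}_0| + |\mathcal{U}_1|)(|\mathcal{D}_0| + |\mathcal{D}_1|),
\]
which rearranges to $(|\mathcal{U}_0| - |\mathcal{U}_1|)(|\mathcal{D}_0| - |\mathcal{D}_1|) \le 0$. This last inequality is immediate from the containments of the previous paragraph, closing the induction.

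The only delicate point is the choice of decomposition: the \emph{upward} monotonicity of $\mathcal{U}$ gives $\mathcal{U}_0 \subseteq \mathcal{U}_1$, while the \emph{downward} monotonicity of $\mathcal{D}$ gives the opposite containment $\mathcal{D}_1 \subseteq \mathcal{D}_0$, and this mismatch is exactly what produces the required sign in the final bilinear inequality. I do not foresee any real obstacle beyond identifying this pairing; the remainder is a routine expansion of a product together with two applications of the inductive bound.
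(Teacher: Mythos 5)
Your argument is correct and complete: the decomposition by the element $n$, the verification that $\mathcal{U}_0,\mathcal{U}_1$ are upsets with $\mathcal{U}_0\subseteq\mathcal{U}_1$ (and dually $\mathcal{D}_1\subseteq\mathcal{D}_0$), the three partition identities, and the reduction to $(|\mathcal{U}_0|-|\mathcal{U}_1|)(|\mathcal{D}_0|-|\mathcal{D}_1|)\le 0$ all check out, and the base case is trivial. Note, however, that the paper does not prove this lemma at all --- it is quoted as a known result with a citation to Kleitman, and is used purely as a black box in the proof of Theorem~\ref{thm:compnum}. So there is nothing in the paper to compare against; what you have written is the standard inductive (ground-set compression) proof of the Harris--Kleitman correlation inequality, essentially the textbook argument. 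If anything, your write-up is a useful supplement the paper omits by design; the only thing I would tighten is to state explicitly the chain $2^n|\mathcal{U}\cap\mathcal{D}| = 2\cdot 2^{n-1}\bigl(|\mathcal{U}_0\cap\mathcal{D}_0|+|\mathcal{U}_1\cap\mathcal{D}_1|\bigr) \le 2\bigl(|\mathcal{U}_0||\mathcal{D}_0|+|\mathcal{U}_1||\mathcal{D}_1|\bigr)$ rather than describing it in prose, but that is cosmetic.
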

We will apply Lemma~\ref{harriskleitman} to prove a lower bound on $c(n,m)$. For convenience, for a family $\F \subseteq \mathcal{P}([n])$, define 
\[
    \mathcal{U}_{\F} = \{X \in \mathcal{P}([n]) : F \subseteq X \text{ for some } F \in \F\}
\]
and 
\[
    \mathcal{D}_{\F} = \{X \in \mathcal{P}([n]) : X \subseteq F \text{ for some } F \in \F\}.
\]
\begin{theorem} \label{thm:compnum} For $1 \leq m \leq 2^n$,
$$c(n,m) \geq 2^{n/2 + 1}\sqrt{m} - m.$$
\end{theorem}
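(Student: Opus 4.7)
The plan is to reduce to a convex family by invoking Lemma~\ref{ClosedUnderContainment} and then exploit the Harris--Kleitman inequality. Given $\F \subseteq \mathcal{P}([n])$ with $|\F| = m$, let $\F'$ be the convex closure defined there. A short verification shows that $\F' = \mathcal{U}_\F \cap \mathcal{D}_\F$: any set sandwiched between two members of $\F$ lies in both $\mathcal{U}_\F$ and $\mathcal{D}_\F$, and conversely any $Z \in \mathcal{U}_\F \cap \mathcal{D}_\F$ has some $F_1 \subseteq Z \subseteq F_2$ with $F_1, F_2 \in \F$. In particular $|\F'| \ge m$, and $\mathcal{U}_{\F'} = \mathcal{U}_\F$, $\mathcal{D}_{\F'} = \mathcal{D}_\F$.

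The key observation is that the collection of sets comparable to some member of $\F$ is exactly $\mathcal{U}_\F \cup \mathcal{D}_\F$. Inclusion--exclusion then gives
\[
    c(n,\F) = |\mathcal{U}_\F| + |\mathcal{D}_\F| - |\mathcal{U}_\F \cap \mathcal{D}_\F| = |\mathcal{U}_\F| + |\mathcal{D}_\F| - |\F'|.
\]
Since $\mathcal{U}_\F$ is an upset and $\mathcal{D}_\F$ is a downset, Lemma~\ref{harriskleitman} yields $|\mathcal{U}_\F| \cdot |\mathcal{D}_\F| \ge 2^n \cdot |\mathcal{U}_\F \cap \mathcal{D}_\F| = 2^n |\F'|$. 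Applying AM--GM gives $|\mathcal{U}_\F| + |\mathcal{D}_\F| \ge 2\sqrt{2^n |\F'|}$, so
\[
    c(n,\F) \ge 2\sqrt{2^n |\F'|} - |\F'|.
\]

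To conclude, I would check that $f(x) := 2^{n/2+1}\sqrt{x} - x$ is increasing on $[0, 2^n]$; indeed, $f'(x) = 2^{n/2}/\sqrt{x} - 1$ is nonnegative precisely when $x \le 2^n$. Since $m \le |\F'| \le 2^n$, this gives $c(n,\F) \ge f(|\F'|) \ge f(m) = 2^{n/2+1}\sqrt{m} - m$, as desired.

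The step I expect to require the most care is the final monotonicity argument: passing to the convex closure makes $|\F'|$ \emph{larger} than $m$, which at first glance appears to push the bound in the wrong direction when we subtract $|\F'|$. It is only because $f$ is monotone increasing up to its critical point at $x = 2^n$, and because $|\F'|$ never escapes that range, that the replacement of $m$ by $|\F'|$ inside $f$ is harmless.
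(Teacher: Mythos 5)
Your proof is correct and uses the same core ingredients as the paper's (the convex closure from Lemma~\ref{ClosedUnderContainment}, the identity $c(\F) = |\mathcal{U}_\F| + |\mathcal{D}_\F| - |\mathcal{U}_\F \cap \mathcal{D}_\F|$, Harris--Kleitman, and AM--GM). There is one genuine, if small, difference in how the convexification is handled. The paper works with a \emph{minimizing} family of size $m$, takes its convex closure, and then \emph{trims} elements off the closure to return to size $m$ while keeping comparability from increasing, so that $|\mathcal{U}_\F \cap \mathcal{D}_\F| = m$ exactly. You instead keep the (possibly larger) convex closure $\F'$, derive $c(n,\F) \ge 2^{n/2+1}\sqrt{|\F'|} - |\F'|$, and then invoke monotonicity of $x \mapsto 2^{n/2+1}\sqrt{x} - x$ on $[0,2^n]$ to replace $|\F'|$ by $m$. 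Your route sidesteps the trimming step entirely --- the paper's claim that one can delete minimal/maximal elements without increasing $c$ while preserving convexity is true but left to the reader, whereas your monotonicity check is an explicit one-line derivative computation. Both are clean; yours is marginally more self-contained. One small redundancy: the assertions $\mathcal{U}_{\F'} = \mathcal{U}_\F$ and $\mathcal{D}_{\F'} = \mathcal{D}_\F$ are true but unused, since you work with $\mathcal{U}_\F$ and $\mathcal{D}_\F$ throughout and only use $\F'$ via the identity $|\mathcal{U}_\F \cap \mathcal{D}_\F| = |\F'|$.
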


\begin{proof}
Let $\F \subseteq \mathcal{P}([n])$ be such that $|\F| = m$ and $c(\F) = c(n,m)$. We may assume $\F$ is convex. If not, by Lemma~\ref{ClosedUnderContainment} we may add sets to make it convex and then remove minimal or maximal elements to obtain $\F'$ such that $|\F'| = |\F|$ and $c(\F') \le c(\F)$.
Note that $c(\F) = |\mathcal{U}_{\F}| + |\mathcal{D}_{\F}| - |\mathcal{U}_{\F}\cap\mathcal{D}_{\F}|$. Since $\F$ is convex, $|\F| = |\mathcal{U}_{\F}\cap\mathcal{D}_{\F}| = m$. Using the AM-GM inequality we get 
\[
    c(\F) \geq 2\sqrt{|\mathcal{U}_{\F}||\mathcal{D}_{\F}|} - m.
\]
Since $\mathcal{U}_{\F}$ is an upset and $\mathcal{D}_{\F}$ is a downset, we apply Lemma \ref{harriskleitman} to get 
\[
    c(\F) \geq 2\sqrt{2^{n}m} - m = 2^{\frac{n}{2}+1}\sqrt{m}-m,
\]
as required.
\end{proof}

It is now a simple consequence of Theorem~\ref{thm:compnum} to see that \eqref{eq:sumpair} holds for all $n$. 

\begin{theorem}\label{thm:sumpair}
Let  $(\F, \G)$ be cross-Sperner in $\mathcal{P}([n])$. Then 
\[
    |\F| + |\G| \leq 2^n - 2^{\floor{n/2}} - 2^{\ceil{n/2}} + 2.
\]
\end{theorem}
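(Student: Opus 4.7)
The plan is to exploit the reduction noted earlier in Section~\ref{sec:comp}: since every set in $\G$ must be incomparable to every set in $\F$, we have $|\G|\le 2^n-c(n,|\F|)$, and hence
\[
|\F|+|\G|\;\le\;|\F|+2^n-c(n,|\F|).
\]
Write $m=|\F|$ and assume, by the symmetry of the statement, that $|\F|\le|\G|$. I would split the argument into the cases $m=1$ and $m\ge 2$.

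For $m=1$, rather than invoke Theorem~\ref{thm:compnum} I would compute $c(n,1)$ by hand. If $\F=\{F\}$ with $|F|=k$, a set is comparable to $F$ iff it is a subset or superset of $F$, giving $c(\F)=2^{k}+2^{n-k}-1$, which is minimized in $k$ at $k=\floor{n/2}$. So $c(n,1)=2^{\floor{n/2}}+2^{\ceil{n/2}}-1$, and substituting into the displayed bound yields exactly $2^n-2^{\floor{n/2}}-2^{\ceil{n/2}}+2$.

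For $m\ge 2$, I would apply Theorem~\ref{thm:compnum} to get $|\F|+|\G|\le g(m):=2m+2^n-2^{n/2+1}\sqrt{m}$. Setting $t=\sqrt{m}$, the map $g(t)=2t^{2}-2^{n/2+1}t+2^n$ is an upward-opening parabola with minimum at $t^{\ast}=2^{n/2-1}$. Seymour's inequality~\eqref{eq:sumsqrt} (which also drops out of applying the same bound to $\G$ and adding) combined with $|\F|\le|\G|$ forces $\sqrt{m}\le 2^{n/2-1}=t^{\ast}$, so $g$ is decreasing on $[\sqrt{2},t^{\ast}]$ and the relevant maximum is $g(2)=4+2^n-2^{(n+3)/2}$. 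Checking $g(2)\le 2^n-2^{\floor{n/2}}-2^{\ceil{n/2}}+2$ reduces to $2^{\floor{n/2}}+2^{\ceil{n/2}}+2\le 4\cdot 2^{(n-1)/2}$, a routine inequality for $n\ge 3$; the cases $n\le 2$ are trivial.

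The main (and only) subtlety is that Theorem~\ref{thm:compnum} is not tight at $m=1$ when $n$ is odd: its proof applies AM--GM to $|\mathcal{U}_\F|$ and $|\mathcal{D}_\F|$, and for a singleton $\F=\{F\}$ with $|F|=k$ these are $2^{k}$ and $2^{n-k}$, which cannot be equal for odd $n$. This is exactly why $m=1$ must be handled via the direct computation of $c(n,1)$; once that case is in hand, the convexity analysis of $g$ cleanly takes care of every $m\ge 2$, and the two cases together give the claimed bound for all $n$.
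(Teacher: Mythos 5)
Your proof is correct and follows essentially the same route as the paper: the comparability reduction $|\F|+|\G|\le |\F|+2^n-c(n,|\F|)$, a case split at $m=1$ versus $m\ge 2$, and for $m\ge 2$ the observation that $2m+2^n-2^{n/2+1}\sqrt{m}$ is decreasing on the admissible range $\sqrt{m}\le 2^{n/2-1}$ (which you obtain from Seymour's inequality and the paper obtains from the product bound), so that the maximum occurs at $m=2$. Your direct computation of $c(n,1)=2^{\floor{n/2}}+2^{\ceil{n/2}}-1$ in the case $m=1$ is in fact the more careful version of the paper's Case 1, whose intermediate step $2^{n/2+1}\ge 2^{\floor{n/2}}+2^{\ceil{n/2}}$ is stated in the wrong direction for odd $n$ (AM--GM gives the reverse inequality); your remark about the non-tightness of Theorem~\ref{thm:compnum} at $m=1$ for odd $n$ identifies exactly why the singleton case must be handled directly.
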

\begin{proof}
Let $(\F,\G) \in \mathcal{P}([n])^2$ be a cross-Sperner pair. Suppose $|\F| = m$. Since $\F, \G \neq \emptyset$, $1 \leq m \leq 2^{n} - 1$. 
Moreover, we may assume without loss of generality that $|\F| \le |\G|$. We know $|\F||\G| \le 2^{2n-4}$ by (\ref{eq:prod}), which implies that $m \le 2^{n-2}$.

Then, $c(\F) \ge |\mathcal{U}_{\F}| + |\mathcal{D}_{\F}| - m$ and $|\G| \leq 2^{n} - c(\F) \le |\mathcal{U}_{\F}| - |\mathcal{D}_{\F}| + m$. 
Thus \begin{equation} \label{eq:sum}
    |\F| + |\G| \leq 2^{n} - |\mathcal{U}_{\F}| - |\mathcal{D}_{\F}| + 2m.
\end{equation}
We have the following two cases. 
\begin{case} Suppose $m = 1$. Since $\F$ only consists of one set, say $F$, we have $\mathcal{U}_{\F} = 2^{n - |F|}$ and $\mathcal{D}_{\F} = 2^{|F|}$. So $|\mathcal{U}_{\F}||\mathcal{D}_{\F}| = 2^{n}$, which, by AM-GM gives $|\mathcal{U}_{\F}| + |\mathcal{D}_{\F}| \geq 2^{n/2 + 1} \ge 2^{\floor{n/2}} + 2^{\ceil{n/2}}$. So \eqref{eq:sum} yields
\[
    |\F| + |\G| \leq 2^{n} - 2^{\floor{n/2}} - 2^{\ceil{n/2}} + 2,
\]
as required. This completes the case $m=1$.
\end{case}

\begin{case} Now suppose $m \geq 2$. By Theorem \ref{thm:compnum}, $|\mathcal{U}_{\F}| + |\mathcal{D}_{\F}| \geq 2^{\frac{n}{2} + 1} \sqrt{m}$, so Equation \eqref{eq:sum} gives
\[
    |\F| + |\G| \leq 2^{n} - 2^{\frac{n}{2}+1}\sqrt{m} + 2m.
\]
By differentiation with respect to $m$ we see that the expression on the right-hand side is decreasing in the range $2 \le m \leq 2^{n-2}$. It is therefore maximized at $m = 2$, where we have 
\[
    2^n - 2^{\frac{n}{2}+1}\sqrt{m} + 2m = 2^n - 2^{\frac{n+3}{2}} + 4.
\]
Note that for all $n \geq 2$,
\[
    2^{\frac{n+3}{2}} - 4 \geq 2^{\floor{n/2}} + 2^{\ceil{n/2}} - 2.
\]
This implies that  $2^{n} - 2^{\floor{n/2}} - 2^{\ceil{n/2}} + 2 \geq  2^{n} - 2^{\frac{n}{2}+1}\sqrt{m} + 2m$ for all $2 \leq m \leq 2^{n-2}$ and $n \geq 2$. This completes the case $m \ge 2$.
\end{case}
\setcounter{case}{0}

We conclude that $|\F| + |\G| \leq 2^{n} - 2^{\floor{n/2}} - 2^{\ceil{n/2}} + 2$, as desired. 
\end{proof}

\section{Bounding $\maxprod(n,k)$}\label{sec:prod}

The goal of this section is to prove Theorems~\ref{thm:prodl} and \ref{thm:produ}.
\subsection{Lower Bound on $\maxprod(n,k)$}

Theorem \ref{thm:prodl} follows directly from the following (slightly stronger) statement.
\begin{lem}\label{lem:prodlower}
    Let $n,k$ be integers with $k \ge 2$ and $n > k\log_2{k} + k$. Then 
    \[
    \maxprod(n,k) \ge
    \left(
        \left(\frac{1}{k} -\frac{1}{2^{\floor{n/k}}}\right)
        \left(1 - \frac{1}{k}\right)^{k-1}
    \right)^k 2^{kn}
    \]
\end{lem}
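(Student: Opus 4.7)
My plan is to construct an explicit cross-Sperner system $(\F_1, \ldots, \F_k)$ achieving the claimed product. Set $m := \lfloor n/k \rfloor$; the hypothesis $n > k \log_2 k + k$ gives $2^m \ge 2k$, so $t := \lfloor 2^m/k \rfloor$ is a positive integer strictly less than $2^m$. I partition $[n]$ into $k$ disjoint blocks $B_1, \ldots, B_k$ each of size $m$, together with a (possibly empty) remainder $R$ of size $n - km$.

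For each $i \in [k]$, I will take $U_i \subseteq 2^{B_i}$ to be an upset of size exactly $t$. Such an upset exists for any $0 \le t \le 2^m$: one may produce it by greedily removing minimal elements from $2^{B_i}$ one at a time, or equivalently by taking the top $t$ elements in any linear extension of the inclusion order on $2^{B_i}$ (such a top-set is automatically closed under supersets). With these upsets fixed, I define
\[
    \F_i := \{F \subseteq [n] : F \cap B_i \in U_i \text{ and } F \cap B_j \notin U_j \text{ for all } j \neq i\}.
\]

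To check that $(\F_1, \ldots, \F_k)$ is cross-Sperner, suppose for contradiction that $F_i \in \F_i$ and $F_j \in \F_j$ satisfy $F_i \subseteq F_j$ for some $i \neq j$. Restricting to $B_i$ gives $F_i \cap B_i \subseteq F_j \cap B_i$, yet by the definitions of $\F_i$ and $\F_j$ we have $F_i \cap B_i \in U_i$ and $F_j \cap B_i \notin U_i$, contradicting that $U_i$ is upward closed. The case $F_j \subseteq F_i$ is symmetric via $B_j$ and $U_j$.

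Finally, since the conditions defining $\F_i$ act independently on each block and leave $R$ entirely free,
\[
    |\F_i| = |U_i| \prod_{j \neq i}\bigl(2^m - |U_j|\bigr) \cdot 2^{|R|} = 2^n \cdot \frac{|U_i|}{2^m}\prod_{j \neq i}\left(1 - \frac{|U_j|}{2^m}\right).
\]
Applying $\lfloor 2^m/k \rfloor / 2^m \ge 1/k - 1/2^m$ for the factor indexed by $i$ and $\lfloor 2^m/k \rfloor / 2^m \le 1/k$ for the remaining $k-1$ factors yields $|\F_i| \ge 2^n (1/k - 1/2^m)(1 - 1/k)^{k-1}$; multiplying over $i$ gives the lemma. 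The argument is essentially mechanical once the construction is in place. The main conceptual point is that one may use upsets of size almost exactly $2^m/k$ (rather than the restrictive $2^{m-s}$ arising from ``$Z \supseteq S$''-type upsets), and this extra flexibility per block is precisely what beats the antichain-based bound $2^{k(n - \ell^*)}$ underlying the Gerbner--Lemons--Palmer--Patk\'os--Sz\'ecsi conjecture.
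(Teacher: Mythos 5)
Your proof is essentially the same construction as the paper's, with cosmetic differences. You use $k$ blocks of equal size $m = \lfloor n/k \rfloor$ plus a free remainder $R$, while the paper partitions $[n]$ into $k$ blocks of size $\lfloor n/k \rfloor$ or $\lceil n/k \rceil$ with no remainder; both choices contribute the same factor $2^n$ after taking the product over blocks. You phrase each per-block family as an upset $U_i$ with the condition $F \cap B_j \notin U_j$ for $j \ne i$, whereas the paper uses a downset $\X_i$ (an initial segment of colex) and its complementary upset $\Y_i$; these are complementary ways of saying the same thing, and the cross-Sperner check and the size computation are identical.

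One small inaccuracy in your justification: you claim the hypothesis gives $2^m \ge 2k$, but this can fail. For example, $k=5$, $n=17$ satisfies $n > k\log_2 k + k \approx 16.6$, yet $2^m = 2^3 = 8 < 10 = 2k$. What the hypothesis actually yields is $2^m > k$: since $n/k > \log_2 k + 1$ we get $m = \lfloor n/k\rfloor > n/k - 1 > \log_2 k$. That weaker bound is all you use anyway --- it gives $t = \lfloor 2^m/k\rfloor \ge 1$ (with $t < 2^m$ automatic for $k \ge 2$), and also ensures $1/k - 1/2^m > 0$ so the stated lower bound is non-vacuous. So the slip is harmless, but you should correct the intermediate claim.
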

\begin{proof}
    Partition $[n]$ into $k$ parts $A_1, A_2, \cdots, A_k$ each of size $\floor{\frac{n}{k}}$ or $\ceil{\frac{n}{k}}$. For each $1 \leq i \leq k$, take $\X_i$ to be an initial segment of colex in $\mathcal{P}(A_i)$ such that $|\X_i| = \lambda_i 2^{|A_i|}$ for some $0 < \lambda_i < 1$. Set $\Y_i := \mathcal{P}(A_i) \setminus \X_i$.
    Now we construct a cross-Sperner system $(\F_1, \F_2, \cdots, \F_k)$. Define \begin{align*}
        \F_i := \{ F \in \mathcal{P}([n]) : F \cap A_i \in \X_i, F \cap A_j \in \Y_j \text{ for all } j \neq i\}.
    \end{align*}
    Refer to Example~\ref{ex:const} for an example of this construction.
    
    To see that $(\F_1, \F_2, \cdots, \F_k)$ is cross-Sperner in $\mathcal{P}([n])$, consider $S \in \F_i$ and $T \in \F_j$. We must show that $S$ and $T$ are incomparable. If $S \subseteq T$. Then $S \cap A_j \subseteq T \cap A_j$, so there is some $Y \in \Y_j$ and $X \in \X_j$ such that $Y \subseteq X$, a contradiction. Analogously, we see that $T$ cannot be a subset of $S$. Hence  $(\F_1, \F_2, \cdots, \F_k)$ is cross-Sperner as required.

    Observe that
    \[
    |\F_i| = |\X_i|\prod_{j \neq i} |\Y_j|,
    \]
    and so 
    $$\pi(n,k) \ge \prod_{i = 1}^{k} |\F_i| = \prod_{i = 1}^{k} \left( |\X_i|\prod_{j \neq i} |\Y_j|\right).$$ 
    To complete the proof of Lemma~\ref{lem:prodlower} it remains to optimise the sizes of the $\lambda_i$. We have
    \begin{equation*}
        |\F_i| = \lambda_i 2^{|A_i|} \prod_{j \neq i} (1-\lambda_j)2^{|A_j|} = \lambda_i 2^{|A_1| + |A_2| + \cdots |A_k|} \prod_{j \neq i} (1 - \lambda_j) = \lambda_i 2^{n} \prod_{j \neq i} (1 - \lambda_j).
    \end{equation*}
    So \begin{equation} \label{eq:prodlower}
        \prod_{i = 1}^{k} |\F_i| = \left(\prod_{i = 1}^{k} \lambda_i(1 - \lambda_i)^{k-1}\right)2^{kn}
    \end{equation}
    For each $1 \le k \le i$, set $\lambda_i = \frac{1}{2^{|A_i|}}\floor{\frac{2^{|A_i|}}{k}}$.
    We have 
    \[
    \frac{1}{k} - \frac{1}{2^{\floor{n/k}}} \le
     \frac{1}{k} - \frac{1}{2^{|A_i|}} \le
      \lambda_i \le
    \frac{1}{k}.
    \]
    For $n > k\log_2{k} + k$ we have $2^{-\floor{n/k}} \le 2^{-(n/k - 1)} < \frac{1}{k}$ and so $\lambda_i$ is not zero.
    Therefore, with this choice of $\lambda_i$ we get
    \[
        \prod_{i = 1}^{k} |\F_i| \ge
        \left(\left(\frac{1}{k} - \frac{1}{2^{\floor{n/k}}}\right)\left(1 - \frac{1}{k}\right)^{k-1}\right)^{k}2^{kn},
    \]
    as required.
\end{proof}
\begin{remark}
Note that if $k$ is a power of $2$, in the proof of Lemma~\ref{lem:prodlower} we have  $\lambda_i = \frac{1}{k}$ for all $1 \le i \le k$. Therefore in this case we can eliminate the $-\frac{1}{2^{\floor{n/k}}}$ term.
\end{remark}

For clarity, we provide an example of the construction given in Lemma \ref{lem:prodlower}.
\begin{example}\label{ex:const}
Let $n = 6$ and $k = 3$. Partition $[6]$ into \[ A_1 = \{1,2\}, A_2 = \{3,4\}, A_3 = \{5,6\}. \]  Then let \begin{align*}
    \mathcal{X}_1 = \{\emptyset, \{1\}\}\\
    \mathcal{X}_2 = \{\emptyset, \{3\}\}\\
    \mathcal{X}_3 = \{\emptyset, \{5\}\}.
\end{align*}
So \begin{align*}
    \mathcal{Y}_1 = \{\{2\}, \{1,2\}\}\\
    \mathcal{Y}_2 = \{\{4\},\{3,4\}\}\\
    \mathcal{Y}_3 = \{\{6\},\{5,6\}\}.
\end{align*}

Then we construct our cross-Sperner system to be \begin{align*}
    \F_1 = \{\{4,6\}, \{4,5,6\}, \{3,4,6\}, \{3,4,5,6\}, \{1,4,6\}, \{1,4,5,6\}, \{1,3,4,6\}, \{1,3,4,5,6\}\}\\
    \F_2 = \{\{2,6\}, \{2,5,6\}, \{2,3,6\}, \{2,3,5,6\}, \{1,2,6\}, \{1,2,5,6\}, \{1,2,3,6\}, \{1,2,3,5,6\}\}\\
    \F_3 = \{\{2,4\}, \{2,4,5\}, \{2,3,4\}, \{2,3,4,5\}, \{1,2,4\}, \{1,2,4,5\}, \{1,2,3,4\}, \{1,2,3,4,5\}\}.
\end{align*}

\end{example}
We now deduce Theorem~\ref{thm:prodl} (restated below for convenience) from Lemma~\ref{lem:prodlower}. 
\prodl*
\begin{proof}
Take $n$ sufficiently large so that 
\[
    \frac{1}{2^{\floor{n/k}}} \le \frac{1}{k} - \frac{1}{ek}\left(1 + \frac{1}{k-1}\right)^{k-1} = \frac{1}{ek} \left( e - \left(1 + \frac{1}{k-1}\right)^{k-1}\right).
\]
This is possible as $ \left(1 + \frac{1}{k-1}\right)^{k-1}$ tends to $e$ from below.
Substituting this into Lemma \ref{lem:prodlower}, we see that 
    \[
    \maxprod(n,k) \ge
    \left(
        \frac{1}{ek}\left(1 + \frac{1}{k-1}\right)^{k-1} 
        \left(1 - \frac{1}{k}\right)^{k-1}
    \right)^k 2^{kn}
    =    \left(\frac{1}{ek}
    \right)^k 2^{kn}.
    \]
\end{proof}

\subsection{Upper Bound on $\maxprod(n,k)$}
The goal of this subsection is to prove Theorem \ref{thm:produ}, restated below for convenience.
\produ*

We will use the following observation.

\begin{lem} \label{lem:combine}
    Let $1 \le j < k$ and let $(\F_1, \F_2, \ldots, \F_k) \subseteq \mathcal{P}([n])^{k}$ be cross-Sperner. Then $\left(\bigcup_{i=1}^{j}\F_i, \bigcup_{i=j+1}^{k}\F_i\right)$ is cross-Sperner in $\mathcal{P}([n])$.
\end{lem}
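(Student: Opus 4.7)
The plan is to unpack the definitions directly: we need to show that for any $F$ in the first combined family and any $G$ in the second combined family, $F$ and $G$ are incomparable. Suppose $F \in \bigcup_{i=1}^{j}\F_i$ and $G \in \bigcup_{i=j+1}^{k}\F_i$. Then there exist indices $a \in \{1,\ldots,j\}$ and $b \in \{j+1,\ldots,k\}$ with $F \in \F_a$ and $G \in \F_b$. Since $a \le j < j+1 \le b$, we have $a \neq b$, so the original cross-Sperner hypothesis on $(\F_1,\ldots,\F_k)$ immediately gives that $F$ and $G$ are incomparable. This is the entire argument.

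There is no real obstacle here: the statement is essentially a tautological consequence of the definition of cross-Sperner, which requires incomparability between every \emph{pair} of distinct families, and pairing each $\F_a$ with $a \le j$ against each $\F_b$ with $b > j$ is just restricting attention to those pairs. The only thing to be slightly careful about is to note that each element of the union lies in \emph{some} constituent $\F_i$, so we can always extract the indices $a,b$ witnessing the needed incomparability. No further case analysis, no use of the structure of $\mathcal{P}([n])$, and no auxiliary lemmas are needed. The proof will therefore be just a few lines.
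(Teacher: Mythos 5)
Your proof is correct and is essentially the same argument as the paper's, just phrased directly rather than by contradiction: both extract indices $a \le j < b$ witnessing membership in the constituent families and invoke the cross-Sperner hypothesis on $(\F_1,\ldots,\F_k)$.
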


\begin{proof}
    Suppose for contradiction that $\left(\bigcup_{i=1}^{j}\F_i, \bigcup_{i=j+1}^{k}\F_i\right)$ is not cross-Sperner. Then there exists some $X \in \bigcup_{i=1}^{j}\F_i$ and $Y \in \bigcup_{i=j+1}^{k}\F_i$ such that $X \subseteq Y$ or $Y \subseteq X$. Since $X \in \F_i$ for some $1 \le i \le j$, and $Y \in \F_t$ for some $j+1 \le t \le k$ we deduce that $(\F_1, \F_2, \ldots, \F_k)$ is not cross-Sperner, a contradiction. 
\end{proof}

We now use Lemma~\ref{lem:combine}, along with Theorem~\ref{thm:compnum}, to give an upper bound on $\maxprod(n,k)$.

\begin{proof}[Proof of Theorem~\ref{thm:produ}]
    Suppose $(\F_1, \F_2, \cdots, \F_k)$ is cross-Sperner in $\mathcal{P}([n])$. Let $a = \floor{k/2}, b = \ceil{k/2}$, and observe that $a + b  = k$. Let $\G = \cup_{i=1}^{a} \F_i$ and $\mathcal{H} = \cup_{a + 1}^{k} \F_i$. Notice that  $(\G,\mathcal{H}) \subseteq \mathcal{P}([n])^2$ is cross-Sperner by Lemma \ref{lem:combine}, so if $|\G| = m$, then $|\mathcal{H}| \leq 2^n -c(n,m)$. Moreover, $\prod_{i=1}^{a} |\F_i| \leq \left(\frac{m}{a}\right)^{a}$ and $\prod_{j=a+1}^{k} |\F_j| \leq \left(\frac{2^n -2^{n/2 +1}\sqrt{m} + m}{b}\right)^b$ since each product is maximized when the families are of equal sizes. Thus, \begin{equation} \label{eq:produpper}
        \prod_{i=1}^{k} |\F_i| = \prod_{i=1}^{a} |\F_i| \prod_{j=a+1}^{k}|\F_j| \leq \left(\frac{m}{a}\right)^{a}\left(\frac{2^n -2^{n/2 +1}\sqrt{m} + m}{b}\right)^b := h(m).
    \end{equation} 
    To find an upper bound on the left hand side of \eqref{eq:produpper}, we differentiate with respect to $m$ to find the value of $m$ that maximises the right hand side. 
    \begin{equation*}
        \frac{d}{dm}h(m) = \left(\frac{m}{a}\right)^a\left(\frac{(2^{n/2} - \sqrt{m})^2}{b}\right)^b(a(\sqrt{m}-2^{n/2})+b\sqrt{m})(m^{3/2}-m2^{n/2})^{-1}.
    \end{equation*}
    Setting this equal to zero yields $m \in \{0, 2^{n},\frac{a^{2}2^n}{k^2}\}$. A simple calculation shows that \eqref{eq:produpper} is maximized when $m = \frac{a^{2}2^n}{k^2}$. Thus 
    \[
    \prod_{i=1}^{k} |\F_i| \leq \left(\frac{2^n}{k^2}\right)^ka^ab^b =
    \left(\frac{2^n}{k^2}\right)^k
    \floor{\frac{k}{2}}^{\floor{k/2}}
    \ceil{\frac{k}{2}}^{\ceil{k/2}},
    \]
    as required.
\end{proof}
Note that for $k$ even, the upper bound given by Theorem \ref{thm:produ} is $\left(\frac{2^n}{2k}\right)^k.$
For $k$ odd, it is not hard to check that the upper bound is less than
$\left(1+\frac{1}{k}\right)\left(\frac{2^n}{2k}\right)^k.$

\section{Bounding $\sigma(n,k)$}\label{sec:sum}

The goal of this section is to prove Theorem~\ref{thm:cleansum}. 

\subsection{Lower Bound on $\sigma(n,k)$}
For our proof of the lower bound in Theorem~\ref{thm:cleansum} we need the following counting lemma.

\begin{lem} \label{lem:antichain}
   Let $\A := \{F_1, F_2, \cdots F_{k-1}\}$ be an antichain in $\mathcal{P}([n])$ where $F_i := \{i\} \cup \{n- \ell + 1, \ldots, n\}$. Then $c(\A) = k2^{\ell} +2^{n - \ell}\left(1 - \frac{1}{2^{k-1}}\right) - (k-1).$ 
\end{lem}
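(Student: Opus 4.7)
The key observation is that every $F_i$ shares the common ``top'' block $S := \{n-\ell+1, \ldots, n\}$, with $F_i = \{i\} \cup S$. So I would first partition the ground set as
\[
[n] = \underbrace{\{1,\ldots,k-1\}}_{P_1} \;\sqcup\; \underbrace{\{k,\ldots,n-\ell\}}_{P_2} \;\sqcup\; \underbrace{S}_{P_3},
\]
and split the notion of comparability in two: let $A_i := \{X : X \subseteq F_i\}$ and $B_i := \{X : F_i \subseteq X\}$, so that $c(\A) = \bigl|\bigcup_{i=1}^{k-1}(A_i \cup B_i)\bigr|$. The plan is to compute $|\bigcup A_i|$, $|\bigcup B_i|$ and the intersection $(\bigcup A_i) \cap (\bigcup B_i)$ separately, and then combine via inclusion–exclusion.

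For the downset union, note that $X \in A_i$ if and only if $X \cap P_2 = \emptyset$ and $X \cap P_1 \subseteq \{i\}$. Hence $X \in \bigcup_i A_i$ iff $X$ avoids $P_2$ entirely and meets $P_1$ in at most one element; counting such sets gives $|\bigcup_i A_i| = (1 + (k-1))\cdot 2^\ell = k\cdot 2^\ell$. For the upset union, $X \in B_i$ iff $S \subseteq X$ and $i \in X$, so $X \in \bigcup_i B_i$ iff $S \subseteq X$ and $X \cap P_1 \neq \emptyset$, giving
\[
\bigl|\textstyle\bigcup_i B_i\bigr| = 2^{n-\ell} - 2^{n-\ell-(k-1)} = 2^{n-\ell}\left(1 - \tfrac{1}{2^{k-1}}\right).
\]

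Next I would show that $\bigl(\bigcup_i A_i\bigr) \cap \bigl(\bigcup_i B_i\bigr) = \A$. Indeed, if $X$ lies in both unions, then $F_j \subseteq X \subseteq F_i$ for some $i,j$; since $|F_i| = |F_j|$, this forces $X = F_i = F_j$, and $i = j$ because distinct $F_i$'s are incomparable. Hence the intersection has exactly $k-1$ elements. Putting the three counts together through
\[
c(\A) = \bigl|\textstyle\bigcup_i A_i\bigr| + \bigl|\bigcup_i B_i\bigr| - \bigl|(\bigcup_i A_i)\cap(\bigcup_i B_i)\bigr|
\]
yields precisely $k\cdot 2^\ell + 2^{n-\ell}(1 - 2^{-(k-1)}) - (k-1)$.

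I do not anticipate a real obstacle here: the result is essentially a bookkeeping exercise once the right decomposition of $[n]$ and the split into up/down parts are fixed. The only spot that requires a moment of care is verifying the intersection identity $(\bigcup A_i) \cap (\bigcup B_i) = \A$, since this is where the antichain property of $\{F_i\}$ is actually used and where the $-(k-1)$ term in the final formula comes from.
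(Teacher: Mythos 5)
Your proof is correct. It shares with the paper the same overall inclusion--exclusion skeleton: compute $\bigl|\bigcup_i \mathcal{D}_{F_i}\bigr| + \bigl|\bigcup_i \mathcal{U}_{F_i}\bigr| - \bigl|\bigl(\bigcup_i \mathcal{D}_{F_i}\bigr) \cap \bigl(\bigcup_i \mathcal{U}_{F_i}\bigr)\bigr|$, and both arguments observe that the intersection is exactly $\A$, of size $k-1$. Where you genuinely diverge is in how the two unions are counted. The paper peels the families off one at a time, telescoping $\mathcal{D}_{F_i} \setminus \bigcup_{j<i}\mathcal{D}_{F_j}$ and $\mathcal{U}_{F_i} \setminus \bigcup_{j<i}\mathcal{U}_{F_j}$, and then sums the increments together with $|\mathcal{S}_1|$ and a correction term; this requires knowing the pairwise intersections $F_i \cap F_j = S$ and keeping careful track of overcounting of the $F_i$ themselves. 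You instead characterize the full unions in one shot via the tripartition $[n] = \{1,\dots,k-1\} \sqcup \{k,\dots,n-\ell\} \sqcup S$: the downset union is precisely the sets avoiding the middle block with at most one element of $\{1,\dots,k-1\}$ (giving $k\cdot 2^\ell$), and the upset union is precisely the sets containing $S$ and meeting $\{1,\dots,k-1\}$ (giving $2^{n-\ell}(1 - 2^{-(k-1)})$). This is cleaner and less error-prone, and it makes the role of each block of the partition transparent. One small remark: in your intersection argument, since $|F_i| = |F_j|$, the containment $F_j \subseteq X \subseteq F_i$ already forces $F_j = F_i = X$; you need only that the $F_i$ are distinct sets, not the full antichain property (which of course also suffices).
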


\begin{proof}
        For each $i$, let $\mathcal{S}_i$ be the collection of sets comparable to $F_i$. For ease of notation, let $G := \{n- \ell + 1, \ldots, n\}$ Observe that 
        \begin{equation}\label{eq:S1}
            |\mathcal{S}_i| = |\mathcal{U}_{F_i} \cup \mathcal{D}_{F_i}| = 2^{\ell+1} + 2^{n-\ell-1} -1, 
        \end{equation}    
        since $|F_i| = \ell+1$ and $\mathcal{U}_{F_i} \cap \mathcal{D}_{F_i} = \{F_i\}$. 

        Note that for each $i > 1$, we have 
        \begin{equation}\label{eq:down}
        \mathcal{D}_{F_i} \setminus \bigcup_{i < j} \mathcal{D}_{F_j} = \mathcal{D}_{F_i} \setminus \mathcal{D}_{F_1} = \{\{i\} \cup Y : Y \subsetneq G\}.
        \end{equation}
        Similarly, observe that for each $i > 1$, we have
        \begin{equation}\label{eq:up}
            \mathcal{U}_{F_i}\setminus \bigcup_{j < i} \mathcal{U}_{F_i} = \{Z \subseteq [n]: Z \supseteq F_i, Z \cap \{1,\ldots,i-1\} = \emptyset \}. 
        \end{equation}
        
    So now putting together \eqref{eq:S1} (to bound $|\mathcal{S}_1|$), \eqref{eq:down}, and \eqref{eq:up}, we obtain
        \begin{align*}
            \left|\bigcup_{i = 1}^{k-1} \mathcal{S}_i \right| &= |\mathcal{S}_1| + \sum_{i = 2}^{k-1} \left|\mathcal{D}_{F_i} \setminus \bigcup_{i < j} \mathcal{D}_{F_j}\right| + \sum_{i = 2}^{k-1}  \left|\mathcal{U}_{F_i} \setminus \bigcup_{i < j} \mathcal{U}_{F_j}\right| - (k-2)\\
            &= 2^{\ell+1} + 2^{n-\ell-1} -1 + (k-2)2^{\ell} + \left(\sum_{i=2}^{k-1}2^{n-\ell-i}\right) - (k-2).
        \end{align*}
        The final term occurs as the sets $F_i$ are counted both in their downset and their upset. Simplifying we get 
        \[c(\A) = k2^{\ell} +2^{n - \ell}\left(1 - \frac{1}{2^{k-1}}\right) - (k-1).\]
    \end{proof}


    

We now prove the lower bound given in Theorem \ref{thm:cleansum}. We actually prove a slightly stronger statement. 
\begin{lem} \label{lem:sumlower}
    Let $n,k \in \mathbb{N}$ where $n \ge 2k-1 - \log_2{k}\ge 1$. Then
    \[
    \s(n,k) \ge 2^{n} - \frac{3}{\sqrt{2}}\left(1 - \frac{1}{2^{k-1}}\right)^{\frac{1}{2}}\sqrt{2^n k} + 2(k-1). 
    \]
\end{lem}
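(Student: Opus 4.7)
The plan is to produce an explicit construction using the antichain from Lemma~\ref{lem:antichain}. Given $\ell$ (to be chosen shortly), take $\A = \{F_1, \ldots, F_{k-1}\}$ as in that lemma and set $\F_i := \{F_i\}$ for $1 \le i \le k-1$, while defining $\F_k$ to be the collection of all subsets of $[n]$ that are incomparable to every member of $\A$. The antichain property of $\A$ ensures that any pair among the singleton families $\F_1, \ldots, \F_{k-1}$ is cross-Sperner, and $(\F_i, \F_k)$ is cross-Sperner by construction. Since $|\F_k| = 2^n - c(\A)$, applying Lemma~\ref{lem:antichain} gives
\[
    \sum_{i=1}^{k} |\F_i| = 2^n + 2(k-1) - \left(k \cdot 2^\ell + 2^{n-\ell}\left(1 - 2^{-(k-1)}\right)\right).
\]

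The next step is to choose $\ell$ so as to minimise $f(\ell) := k \cdot 2^\ell + 2^{n-\ell}(1 - 2^{-(k-1)})$. Over $\mathbb{R}$, $f$ attains its minimum at $\ell_0 := \tfrac{1}{2}\log_2\!\left((1-2^{-(k-1)}) 2^n / k\right)$, with value $2\sqrt{(1-2^{-(k-1)})2^n k}$. I would take $\ell$ to be the integer nearest $\ell_0$, so that $|\ell - \ell_0| \le 1/2$. Rewriting $f(\ell) = \sqrt{(1-2^{-(k-1)})2^n k}\,\left(2^{\ell - \ell_0} + 2^{-(\ell - \ell_0)}\right)$ and using $2^{1/2} + 2^{-1/2} = 3/\sqrt{2}$ yields $f(\ell) \le \tfrac{3}{\sqrt{2}}\sqrt{(1-2^{-(k-1)})2^n k}$, which substituted into the display above is precisely the claimed lower bound.

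The main obstacle is to check that the chosen $\ell$ is feasible for Lemma~\ref{lem:antichain}: one needs $0 \le \ell \le n - k + 1$ so that $\{1, \ldots, k-1\}$ is disjoint from $\{n - \ell + 1, \ldots, n\}$, which makes the $F_i$ distinct sets of common size $\ell + 1$ and hence automatically an antichain. The upper bound $\ell \le n - k + 1$ will follow from $\ell \le \ell_0 + 1/2$ together with $\log_2(1 - 2^{-(k-1)}) \le 0$ and the hypothesis $n \ge 2k - 1 - \log_2 k$, while $\ell \ge 0$ will follow from $\ell_0 \ge -1/2$ under the same hypothesis (using $k \ge 2$, so that $2k - 1 - 2\log_2 k \ge 1$). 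A minor consistency check is that this simultaneously guarantees $\F_k \neq \emptyset$, since the bound on $f(\ell)$ is strictly less than $2^n$ in the stated range.
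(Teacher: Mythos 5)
Your proposal is correct and follows essentially the same route as the paper: the same construction (an antichain of $k-1$ sets $F_i=\{i\}\cup\{n-\ell+1,\dots,n\}$ as singleton families plus the family of all sets incomparable to them), the same appeal to Lemma~\ref{lem:antichain}, and the same optimization of $\ell$ with a half-integer rounding loss giving the factor $2^{1/2}+2^{-1/2}=3/\sqrt{2}$. The only cosmetic difference is that you parametrize by $\ell$ and round to the nearest integer, whereas the paper parametrizes by $a=n-2\ell$ and rounds to the nearest integer of the correct parity — these are equivalent, and your feasibility checks for $0\le\ell\le n-k+1$ match the paper's hypothesis exactly.
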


\begin{proof}
    Let $a$ be an integer with the same partity as $n$ to be specified later. Let $G : = \{ n - \frac{n - a}{2} + 1, \ldots, n\}$. 
    Let $\A = \{F_1, F_2, \cdots F_{k-1}\}$ be an antichain in $\mathcal{P}([n])$, where $F_i = G \cup \{i\}$. 
This is possible as long as $n - \frac{n-a}{2} \ge k-1$, that is, $n \ge 2(k-1) - a$.
    
    By Lemma \ref{lem:antichain} (setting $\ell = \frac{n - a}{2}$), we obtain 
    \[
        c(\A) = k2^{\frac{n - a}{2}} +  2^{\frac{n + a}{2}}\left(1 - \frac{1}{2^{k-1}}\right)  - (k-1).
    \]
    Define $\F_i := \{F_i\}$ for $1 \leq i \leq k-1$ and $\F_k := \{Z \subseteq [n] : Z \text{ incomparable to } F_i \text{ for all } 1 \leq i \leq k-1\}$. By construction, $(\F_1,\ldots, \F_k)$ is cross-Sperner in $\mathcal{P}([n])$. 
    We have 
    \begin{align} \label{eq:sumlower}
        \sum_{i=1}^{k} |\F_i| &= (k-1) + 2^n - c(\A) \nonumber \\ 
    &= 2^{n} - \sqrt{2^n}\left(\frac{k}{\sqrt{2^a}} +  \left(1 - \frac{1}{2^{k-1}}\right)\sqrt{2^a}\right) + 2(k-1). 
    \end{align}
    Differentiating this expression with respect to $a$ gives 
    \[
        -\frac{\ln{2}}{2}\sqrt{2^n}\left(- \frac{k}{\sqrt{2^a}} + \left(1 - \frac{1}{2^{k-1}}\right)\sqrt{2^a} \right).
    \]
    
    Thus we can see that if there were no restrictions on $a$ the maximum value of (\ref{eq:sumlower}) would be achieved when $2^a = k\frac{2^{k-1}}{2^{k-1}-1}$; that is, $ a = {\log_2(k)+ \log_2{\left(\frac{2^{k-1}}{2^{k-1}-1}\right)}}$.
   However, we require $a$ to be an integer with the same parity as $n$. Set $a$ to be the unique such integer such that
   \[
    - 1 
    < a - {\log_2(k) - \log_2{\left(\frac{2^{k-1}}{2^{k-1}-1}\right)}} 
    \le 1
    \] 
    and let $c = a - {\log_2(k) -\log_2{\left(\frac{2^{k-1}}{2^{k-1}-1}\right)}}$. Note that $n \ge 2k-1 - \log_2{k}\ge 1$ by hypothesis. This ensures that $n \ge 2(k-1) - a$ for any such value of $a$.
    We have 
    \begin{align} \label{eq:sumlower_c}
        \sum_{i=1}^{k} |\F_i| 
        &= 2^{n}  - \sqrt{2^{n}}\left(\frac{k}{\sqrt{2^a}} +  \left(1 - \frac{1}{2^{k-1}}\right)\sqrt{2^a}\right) 
        + 2(k-1)
        \nonumber \\
        &= 2^{n} 
        - \sqrt{2^nk}\left(1 - \frac{1}{2^{k-1}}\right)^{1/2}\left(\frac{1}{\sqrt{2^c}} + \sqrt{2^c}\right) 
        + 2(k-1) \\
        &= 2^{n} - \sqrt{2^n k}\left(1 - \frac{1}{2^{k-1}}\right)^{1/2}\left(\frac{3}{\sqrt{2}}\right) 
        + 2(k-1)\nonumber
    \end{align}
    where the last inequality follows from the fact that the bracketed expression in (\ref{eq:sumlower_c}) is maximised when $c = 1$ for $c$ in the range $-1 < c \le 1$. 
\end{proof}

For certain values of $k$ we can prove a stronger lower bound which essentially matches the upper bound given in Lemma \ref{lem:sumupper}.
\begin{cor}
    Let $n,k \in \mathbb{N}$ and suppose that $k = 2^a$ where $a$ has the same parity as $n$ and $n \ge 2(k-1) - a$. Then 
    \[
        \s(n,k) \geq 2^n  - 2\sqrt{2^{n}k}\left(1 - \frac{1}{2^{k}} \right) +2(k-1).
    \]
\end{cor}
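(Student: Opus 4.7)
The plan is to apply the construction from the proof of Lemma~\ref{lem:sumlower} but with the integer parameter $a$ chosen \emph{exactly} equal to $\log_2 k$, which is legitimate here precisely because $k = 2^a$ is a power of two. In the general case, Lemma~\ref{lem:sumlower} had to round the optimal real value of $a$ to a nearby integer of the correct parity, and this rounding is what produces the $\frac{3}{\sqrt{2}}$ factor (which equals $\frac{1}{\sqrt{2}} + \sqrt{2}$, the worst case of $\frac{1}{\sqrt{2^c}} + \sqrt{2^c}$ on $-1 < c \le 1$). Under the hypotheses of the corollary, we can take $c = 0$, which turns this bracketed expression into $2$, yielding the improved constant.

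Concretely, I would set $a = \log_2 k$ and $\ell = (n-a)/2$. The parity hypothesis on $a$ guarantees $\ell$ is a nonnegative integer, and the hypothesis $n \ge 2(k-1) - a$ guarantees that $n - \ell \ge k - 1$, so the antichain $\A = \{F_1, \ldots, F_{k-1}\}$ with $F_i = \{i\} \cup \{n - \ell + 1, \ldots, n\}$ embeds in $\mathcal{P}([n])$. Taking $\F_i = \{F_i\}$ for $1 \le i \le k-1$ and letting $\F_k$ consist of all subsets of $[n]$ that are incomparable to every $F_i$, we produce a cross-Sperner system as in Lemma~\ref{lem:sumlower}, with
\[
\sum_{i=1}^{k} |\F_i| = (k-1) + 2^n - c(\A).
\]

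Next, I would apply Lemma~\ref{lem:antichain} with $\ell = (n-a)/2$ to obtain
\[
c(\A) = k \cdot 2^{(n-a)/2} + 2^{(n+a)/2}\left(1 - \frac{1}{2^{k-1}}\right) - (k-1).
\]
Substituting $2^a = k$ gives $2^{(n-a)/2} = \sqrt{2^n/k}$ and $2^{(n+a)/2} = \sqrt{2^n k}$, so
\[
c(\A) = \sqrt{2^n k} + \sqrt{2^n k}\left(1 - \frac{1}{2^{k-1}}\right) - (k-1) = 2\sqrt{2^n k}\left(1 - \frac{1}{2^k}\right) - (k-1),
\]
using $2 - \frac{1}{2^{k-1}} = 2\left(1 - \frac{1}{2^k}\right)$. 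Plugging this into $\sum_i |\F_i| = (k-1) + 2^n - c(\A)$ gives exactly the claimed bound.

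There is no real obstacle here: all the analytical work (the choice of construction, the comparability calculation, and the optimisation in $a$) has already been done in Lemmas~\ref{lem:antichain} and \ref{lem:sumlower}. The only point that requires attention is verifying that the hypotheses of the corollary make the optimal integer choice $a = \log_2 k$ admissible, and then checking the algebraic identity $2 - 2^{-(k-1)} = 2(1 - 2^{-k})$ that converts the expression into the stated form.
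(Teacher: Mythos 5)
Your proof is correct and matches the paper's approach exactly: the paper's proof is the one-line instruction ``Apply the proof of Lemma~\ref{lem:sumlower} with $a = \log_2 k$,'' and you have simply spelled out the details — checking admissibility of $a = \log_2 k$ under the corollary's hypotheses, applying Lemma~\ref{lem:antichain}, and carrying out the algebra including the identity $2 - 2^{-(k-1)} = 2(1 - 2^{-k})$ — all of which are correct.
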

\begin{proof}
    Apply the proof of Lemma \ref{lem:sumlower} with $a = \log_2{k}$.
\end{proof}


\subsection{Upper Bound on $\sigma(n,k)$}

\begin{lem}\label{lem:sumupper}
    For $k \ge 2$ and $n$ such that $2^n \ge (k-1)(1 + \sqrt{k-1})^2$,  
    \[ \s(n,k) \leq  2^n  - 2\sqrt{2^n(k-1)} + 2(k-1).\]
\end{lem}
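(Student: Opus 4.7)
The plan is to combine two separate applications of Theorem~\ref{thm:compnum} with a case split on the size of the largest family.

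First I will bound each individual $|\F_i|$ by a ``representative-antichain'' argument: for each $j\ne i$ pick some $F_j \in \F_j$, and observe that $\{F_j : j \ne i\}$ is an antichain of size $k-1$ (the sets are pairwise incomparable by the cross-Sperner property, hence distinct). Every set of $\F_i$ is incomparable to every $F_j$, so $|\F_i| \le 2^n - c(\{F_j : j \ne i\})$, and Theorem~\ref{thm:compnum} will give
\[
    |\F_i| \;\le\; 2^n - 2^{n/2+1}\sqrt{k-1} + (k-1) \;=\; (2^{n/2} - \sqrt{k-1})^2.
\]

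Next, for any fixed index $i$, Lemma~\ref{lem:combine} tells me that $(\F_i, \bigcup_{j\ne i}\F_j)$ is cross-Sperner, and since the $\F_j$ are pairwise disjoint (any set is comparable to itself) this yields
\[
    \sum_{j=1}^k |\F_j| \;\le\; |\F_i| + \bigl(2^n - c(\F_i)\bigr) \;\le\; 2^n + f(|\F_i|),
\]
where $f(x) := 2x - 2^{n/2+1}\sqrt{x}$. A direct calculation (factoring $f(x)-f(k-1)$ via $y = \sqrt{x}$ into $2(y-\sqrt{k-1})(y+\sqrt{k-1}-2^{n/2})$) shows $f(x) \le f(k-1)$ precisely when $x \in [\,k-1,\, (2^{n/2}-\sqrt{k-1})^2\,]$.

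With these two ingredients, the main argument is a case split. If some $|\F_i| \ge k-1$, then by the antichain bound this $|\F_i|$ also satisfies $|\F_i| \le (2^{n/2}-\sqrt{k-1})^2$, so it lies in the interval on which $f(x)\le f(k-1)$; the display above then gives $\sum_j |\F_j| \le 2^n + f(k-1) = 2^n - 2\sqrt{2^n(k-1)} + 2(k-1)$. Otherwise every $|\F_i|\le k-2$ and so $\sum_j|\F_j| \le k(k-2)$. To close this case I will use the hypothesis $2^n \ge (k-1)(1+\sqrt{k-1})^2$: writing $v := \sqrt{k-1}$ it reads $\sqrt{2^n} \ge v(1+v)$, from which $\sqrt{2^n}(\sqrt{2^n}-2v) \ge v(1+v)\cdot v(v-1) = (k-1)(k-2)$; adding $2(k-1)$ yields $2^n - 2\sqrt{2^n(k-1)} + 2(k-1) \ge (k-1)k \ge k(k-2)$, as needed.

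The step I expect to be the key insight is the representative-antichain bound in the first paragraph: the happy coincidence is that $(2^{n/2}-\sqrt{k-1})^2$ is \emph{exactly} the upper endpoint of the interval on which $f(x)\le f(k-1)$, so the two applications of Theorem~\ref{thm:compnum} mesh perfectly and no further optimisation over which $\F_i$ to use is required. The hypothesis on $2^n$ enters only in the ``all small'' case and is essentially the weakest condition that makes the comparison against $(k-1)(k-2)$ close; a small sanity check is that for $k=2$ the ``all small'' case is vacuous and the argument recovers the $k=2$ bound consistent with Theorem~\ref{thm:sumpair}.
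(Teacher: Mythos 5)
Your proof is correct, but it takes a genuinely different route from the paper's. The paper orders the families by size, sets $\G = \bigcup_{i=1}^{k-1}\F_i$ with $m = |\G|$, uses the ordering to get $|\F_k| \geq m/(k-1)$, and feeds this back into the comparability bound to obtain the constraint $\sqrt{m} \leq \sqrt{2^n}\,\frac{\sqrt{k-1}}{1+\sqrt{k-1}}$; the hypothesis on $2^n$ is then used to decide which endpoint of $[\,k-1,\, 2^n(k-1)/(1+\sqrt{k-1})^2\,]$ maximises $2^n + f(m)$. You instead bound each individual $|\F_i|$ via a representative antichain $\{F_j : j \neq i\}$, which is an idea not present in the paper; the payoff is the pleasant fact that the resulting cap $(2^{n/2}-\sqrt{k-1})^2$ is exactly the right endpoint of the interval on which $f(x) \leq f(k-1)$, so no endpoint comparison is needed. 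The cost is a case split: when no $|\F_i| \geq k-1$, you fall back on the crude bound $\sum_j |\F_j| \leq k(k-2)$, and it is only there that the hypothesis on $2^n$ is invoked (versus the paper, which needs the hypothesis to pick the maximiser in the optimisation). Both proofs apply Theorem~\ref{thm:compnum} together with Lemma~\ref{lem:combine} and optimise the same function $f(x) = 2x - 2^{n/2+1}\sqrt{x}$, and both implicitly use that the $\F_i$ are pairwise disjoint. One small thing worth making explicit: the factorisation argument that $f(x) \leq f(k-1)$ precisely on $[\,k-1,\,(2^{n/2}-\sqrt{k-1})^2\,]$ tacitly requires $\sqrt{k-1} \leq 2^{n/2}-\sqrt{k-1}$, i.e.\ $2^n \geq 4(k-1)$; this does follow from the stated hypothesis since $(1+\sqrt{k-1})^2 \geq 4$ for $k \geq 2$, but it deserves a sentence.
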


\begin{proof}
    Suppose $(\F_1, \F_2, \cdots, \F_k)$ is cross-Sperner in $\mathcal{P}([n])$. We may and will assume that $|\F_1| \leq |\F_2| \leq \cdots \leq |\F_k|$. Define $\mathcal{G} := \cup_{i=1}^{k-1} \F_i$. Let $m = |\mathcal{G}|$ and observe that, as each family is non-empty, we have $m \ge k-1$. 

    Note that $|\F_k| \le 2^{n} - c(n,m) \le 2^{n} - 2^{n/2 +1}\sqrt{m} +m = \left(\sqrt{2^n} - \sqrt{m}\right)^2$.
    Since the families are ordered by increasing size, $|\F_k| \geq \frac{m}{k-1}$. Putting this together gives
\[
    \frac{m}{k-1} \le |\F_k| \le \left(\sqrt{2^n} - \sqrt{m}\right)^2.
\]
    Rearranging, we obtain
\begin{equation}\label{eq:mup}
     \sqrt{m} \le \sqrt{2^n}\left(\frac{\sqrt{k-1}}{1+\sqrt{k-1}}\right).
\end{equation}

    Now consider the sum
    \begin{equation} \label{eq:sumupper}
            \sum_{i=1}^{k} |\F_i| = |\mathcal{G}| + |\F_k|
             \leq m + \left(\sqrt{2^n} - \sqrt{m}\right)^2.
    \end{equation}
    Let $x = \tfrac{1}{2}\sqrt{2^{n}} - \sqrt{m}$. Substituting this into expression (\ref{eq:sumupper}) above gives 
    \[
        \left(\tfrac{1}{2}\sqrt{2^{n}} - x\right)^2 + \left(\tfrac{1}{2}\sqrt{2^{n}} + x\right)^2 
        = 2^{n-1} + 2x^2
    \]
    and it is clear that (\ref{eq:sumupper}) is maximised when $|x| = |\tfrac{1}{2}\sqrt{2^{n}} - \sqrt{m}|$ is as large as possible. 
    Combining $m \ge k-1$ with \eqref{eq:mup} gives $\sqrt{k-1} \le \sqrt{m} \le \sqrt{2^n}\left(\frac{\sqrt{k-1}}{1+\sqrt{k-1}}\right)$, we need only find which of these end values is further from $\tfrac{1}{2}\sqrt{2^{n}}$.
 
If we have $2^n \ge (k-1)(1+\sqrt{k-1})^2$ then
\[
\tfrac{1}{2}\sqrt{2^{n}} - \sqrt{k-1} > \tfrac{1}{2}\sqrt{2^{n}} - \frac{\sqrt{2^n}}{1 + \sqrt{k-1}} = \sqrt{2^n}\left(\frac{\sqrt{k-1}}{1+\sqrt{k-1}}\right) - \tfrac{1}{2}\sqrt{2^{n}}
\]
and thus expression (\ref{eq:sumupper}) is maximised when $m = k-1$.
Substituting $m = k-1$ into (\ref{eq:sumupper}) gives
\begin{align*}
 \sum_{i=1}^{k} |\F_i| &\le  (k-1) + \left(\sqrt{2^n} - \sqrt{k-1}\right)^2 \\
 &= 2^n - 2\sqrt{2^n(k-1)} + 2(k-1).
\end{align*}
\end{proof}

\begin{proof}[Proof of Theorem~\ref{thm:cleansum}]
Lemmas \ref{lem:sumlower} and \ref{lem:sumupper} together give Theorem \ref{thm:cleansum}. Observe that $2^{2k} \ge {(k-1)(1+\sqrt{k-1})^2}$ for $k \ge 2$ so the conditions of Lemma \ref{lem:sumupper} hold.
\end{proof}
\section{Closing remarks}\label{sec:conc}
In Section~\ref{sec:prod} we provide upper and lower bounds on $\maxprod(n,k)$ in Theorems~\ref{thm:prodl} and \ref{thm:produ}. Comparing these bounds shows that they differ by a factor of $\left(\frac{e}{2}\right)^k$ for $k$ even and less than $\left(1+\frac{1}{k}\right)\left(\frac{e}{2}\right)^k$ for $k$ odd. It would be interesting to tighten this gap. We believe that (for large $n$) the bound given in Lemma~\ref{lem:prodlower} ought to be essentially best possible. 

\begin{conj} Let $k \ge 2$ be fixed and $n$ be sufficiently large with respect to $k$. Then
\[
\pi(n,k) = (1+o(1)) \left(\frac{(k-1)^{k-1}}{k^k}2^n\right)^k.
\]
\end{conj}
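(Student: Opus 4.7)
The lower bound $\pi(n,k) \ge (1+o(1))\left(\tfrac{(k-1)^{k-1}}{k^k}\right)^{k} 2^{kn}$ is already given by Lemma~\ref{lem:prodlower} (for fixed $k$ and $n \to \infty$ the $2^{-\floor{n/k}}$ error term vanishes), so only the matching upper bound remains. Theorem~\ref{thm:produ} gives $(2^n/(2k))^k$ for $k$ even, which is off from the target by a multiplicative factor of roughly $(e/2)^k$: an exponential-in-$k$ gap, suggesting that a purely local density improvement will not suffice and that a structural step will be needed.

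My plan has two parts. First, replace the coarse ``bundle into two halves'' step of Theorem~\ref{thm:produ} with an inductive bound. Given extremal $(\F_1,\ldots,\F_k)$, convexify each $\F_i$ via Lemma~\ref{ClosedUnderContainment} so that Harris--Kleitman yields $|\mathcal{U}_{\F_i}||\mathcal{D}_{\F_i}| \ge 2^n |\F_i|$. The cross-Sperner condition then forces $\bigcup_{j \ne i} \F_j \subseteq \mathcal{P}([n]) \setminus (\mathcal{U}_{\F_i} \cup \mathcal{D}_{\F_i})$, and these $k-1$ families are themselves cross-Sperner \emph{inside that restricted subposet}. This invites a recursion of the shape $\pi(n,k) \le \sup_m \bigl(m \cdot \widetilde{\pi}_{k-1}(n,m)\bigr)$, where $\widetilde{\pi}_{k-1}(n,m)$ is the maximum $(k-1)$-family product confined to the incomparability region of a size-$m$ convex set family. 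If this recursion can be solved with optimum at $m \approx \tfrac{(k-1)^{k-1}}{k^k} 2^n$ (exactly as in the construction), the conjectured coefficient should fall out.

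Second, a stability step is likely needed, since the recursion will presumably be tight only at a product construction. The goal is to show that every near-extremal $(\F_1,\ldots,\F_k)$ approximates Lemma~\ref{lem:prodlower}: there is a partition $[n] = A_1 \sqcup \cdots \sqcup A_k$ and families $\X_i \subseteq \mathcal{P}(A_i)$ with $|\X_i| \approx 2^{|A_i|}/k$ such that each $\F_i$ is, up to an $o(1)$ fraction of sets, of the form $\{F : F \cap A_i \in \X_i$ and $F \cap A_j \in \mathcal{P}(A_j) \setminus \X_j$ for all $j \ne i\}$. For any such product-shaped family, the direct optimization already carried out in \eqref{eq:prodlower} recovers the conjectured value after tuning the $\lambda_i$.

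The stability step is the expected bottleneck. Standard shifting/compression preserves Sperner-ness within a single family but can easily create comparabilities across families, so the usual toolkit does not apply directly. A more promising substitute is a container-style argument: show that every near-extremal $k$-tuple is contained in one of subexponentially many product templates, and verify the bound template-by-template. A secondary difficulty arising in the first step is that the ground set in the recursion is a complicated subposet of $\mathcal{P}([n])$ rather than a Boolean lattice, so Theorem~\ref{thm:compnum} will likely need to be extended to general posets (where Harris--Kleitman still holds, provided the lattice structure is preserved) before the recursion can be closed cleanly.
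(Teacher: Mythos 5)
This statement is a \emph{conjecture} in the paper's closing remarks; the paper does not prove it, so there is no proof of it to compare your attempt against. What the paper does establish is the lower bound of Theorem~\ref{thm:prodl} via the construction in Lemma~\ref{lem:prodlower} — which, after optimizing the $\lambda_i$ in \eqref{eq:prodlower} exactly (giving $\lambda_i = 1/k$, hence coefficient $(k-1)^{k-1}/k^k$) rather than bounding by $1/(ek)$, matches the conjectured value — and the upper bound of Theorem~\ref{thm:produ}, which is off by roughly a factor $(e/2)^k$. You correctly identify all of this, so the lower-bound half of your proposal is fine; only the upper bound is genuinely open.

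Your plan for the upper bound has real gaps, several of which you flag yourself. The central problem with the recursion $\pi(n,k) \le \sup_m \bigl(m \cdot \widetilde{\pi}_{k-1}(n,m)\bigr)$ is that $\widetilde{\pi}_{k-1}$ cannot be a function of $m$ alone: it depends on the entire shape of the incomparability region of $\F_i$, which is an arbitrary subposet of $\mathcal{P}([n])$, not a Boolean lattice and in general not even a sublattice. Lemma~\ref{harriskleitman} and Theorem~\ref{thm:compnum} are statements about the Boolean cube, and there is no off-the-shelf extension of Theorem~\ref{thm:compnum} to an arbitrary convex subposet; the incomparability region of a convex family can be quite lopsided, and it is far from clear that a useful analogue holds. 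To close the recursion you would need to show that, among all convex $\F_i$ of size $m$, the one whose incomparability region is \emph{most permissive} for the remaining $k-1$ families is approximately the one arising from the product construction — which is essentially the whole stability problem recast, not a preliminary reduction.

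The second step, a container/stability argument, is a reasonable direction but as written is a statement of intent only: you correctly observe that compressions can introduce cross-comparabilities and hence do not obviously apply, but you supply no candidate container lemma or verifiable reduction. Note also that the construction of Lemma~\ref{lem:prodlower} has continuous freedom in both the part sizes $|A_i|$ and the thresholds $\lambda_i$, so the set of ``product templates'' is not finite; any container statement would have to quantize this. In short, your reading of the existing results and of where the difficulty lies is accurate, but nothing in the proposal yet constitutes a proof of the upper bound, so the conjecture remains open after this attempt.
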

Our lower bound on $\pi(n,k)$ holds in the case $n > k\log_2{k} + k$. For small fixed values of $n$ and $k$, we also have some bounds for $\pi(n,k)$, see~\cite{Akina}. In particular, we have $f(4,3)=9$, $f(5,3)\ge 81$ $f(6,3) \geq 810$ and $f(5,4) \geq 108$. \\


\textbf{Note added before submission:} In the final stages of preparation of this article, we noticed a recent paper of Gowty, Horsley, and Mammoliti~\cite{Gowty}, concerning the comparability number. They give a very different proof of Theorem~\ref{thm:compnum} (see Corollary 1.2 of \cite{Gowty}) and use it as we do to deduce Theorem~\ref{thm:sumpair}. They also provide some very interesting further analysis of the comparability number and sets that minimise $c(n,m)$.

\bibliographystyle{abbrv}
\bibliography{ref}

\newpage

\end{document}